\documentclass{article}

\usepackage{amsmath, amsfonts, amssymb, amsthm, mathtools}
\usepackage{graphicx}

\newtheorem{theoremA}{Theorem}

\newtheorem{theorem}{Theorem}
\newtheorem*{corollary*}{Corollary}
\newtheorem{lemma}{Lemma}
\newtheorem{lemmaA}{Lemma}

\numberwithin{equation}{section}

\setlength{\parindent}{0pt}

\title{A refinement of a Hardy type inequality for negative exponents, and sharp applications to Muckenhoupt weights on $\mathbb R$}
\author{Eleftherios N. Nikolidakis \and Theodoros Stavropoulos}

\begin{document}
\maketitle

\begin{abstract}
We prove a sharp integral inequality that generalizes the well known Hardy type integral inequality for negative exponents. We also give sharp applications
in two directions for Muckenhoupt weights on $\mathbb R$. This work refines the results that appear in \cite{9}.
\end{abstract}

\section{Introduction} \label{sect:1}
In 1920, Hardy has proved (as one can see in \cite{2} or \cite{3}) the following inequality which is known as Hardy's inequality
\begin{theoremA} \label{thm:A}
If $p>1$, $a_n\geq 0$ and $A_n=a_1+a_2+\ldots+a_n$, $n\in \mathbb N^*$, then
\begin{equation} \label{eq:1p1}
\sum_{n=1}^\infty \left(\frac{A_n}{n}\right)^p \leq \left(\frac{p}{p-1}\right)^p\sum_{n=1}^\infty a^p_n.
\end{equation}
Moreover, inequality \eqref{eq:1p1} is best possible, that is the constant on the right side cannot be decreased.
\end{theoremA}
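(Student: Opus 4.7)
The plan is to prove \eqref{eq:1p1} in two stages: first establish an auxiliary inequality of the form $\sum \alpha_n^p \le \tfrac{p}{p-1}\sum a_n\alpha_n^{p-1}$, where $\alpha_n := A_n/n$, and then extract the sharp constant via a single application of Hölder's inequality. I would work with the partial sums $\sum_{n=1}^N$ throughout in order to avoid convergence issues, and pass to the limit $N\to\infty$ by monotone convergence at the end.

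For the auxiliary inequality, the key identity is $a_n = n\alpha_n - (n-1)\alpha_{n-1}$ (with the convention $\alpha_0 := 0$). Substituting it into the right-hand side gives
\[
\sum_{n=1}^N a_n \alpha_n^{p-1} = \sum_{n=1}^N \bigl[n\alpha_n^p - (n-1)\alpha_{n-1}\alpha_n^{p-1}\bigr].
\]
I would then apply Young's inequality $\alpha_{n-1}\alpha_n^{p-1} \le \tfrac{1}{p}\alpha_{n-1}^p + \tfrac{p-1}{p}\alpha_n^p$ to the cross term. After rearranging, the contributions of the form $\tfrac{n}{p}\alpha_n^p - \tfrac{n-1}{p}\alpha_{n-1}^p$ telescope, producing a non-negative boundary value $\tfrac{N}{p}\alpha_N^p \ge 0$ which can be discarded, and what remains is $\sum_{n=1}^N a_n\alpha_n^{p-1} \ge \tfrac{p-1}{p}\sum_{n=1}^N \alpha_n^p$, which is the desired auxiliary inequality.

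For the final step, Hölder's inequality with conjugate exponents $p$ and $p/(p-1)$ yields $\sum_{n=1}^N a_n\alpha_n^{p-1} \le \bigl(\sum_{n=1}^N a_n^p\bigr)^{1/p}\bigl(\sum_{n=1}^N \alpha_n^p\bigr)^{(p-1)/p}$; combining this with the auxiliary inequality and dividing by the finite factor $\bigl(\sum_{n=1}^N \alpha_n^p\bigr)^{(p-1)/p}$ yields \eqref{eq:1p1} for the partial sums, and letting $N \to \infty$ finishes the upper bound. To verify sharpness, I would test the inequality against the family $a_n = n^{-1/p - \varepsilon}$ with small $\varepsilon > 0$: the $p$-th power sum $\sum n^{-1-p\varepsilon}$ converges, and the asymptotic $A_n \sim n^{(p-1)/p - \varepsilon}/\bigl((p-1)/p - \varepsilon\bigr)$ makes the ratio of the two sides of \eqref{eq:1p1} tend to $(p/(p-1))^p$ as $\varepsilon \to 0^+$. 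The step requiring the most care is controlling the lower-order terms in the asymptotic expansion of $A_n$ uniformly as $\varepsilon \to 0^+$, so that they do not contaminate the limit of the ratio; this is where I expect the sharpness argument to demand genuine attention.
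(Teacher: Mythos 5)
The paper does not actually prove Theorem~A; it states it as a classical result and refers the reader to Hardy's 1920 paper and to Hardy--Littlewood--P\'olya for a proof. So there is no in-paper argument to compare against, and the only question is whether your proof stands on its own. It does.

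Your argument is the standard Hardy--Littlewood--P\'olya proof (Theorem~326 of their book, essentially). The auxiliary inequality is established correctly: with $\alpha_n = A_n/n$, $\alpha_0=0$, and $a_n = n\alpha_n - (n-1)\alpha_{n-1}$, applying Young's inequality $\alpha_{n-1}\alpha_n^{p-1}\le \tfrac1p\alpha_{n-1}^p+\tfrac{p-1}{p}\alpha_n^p$ and simplifying the coefficient of $\alpha_n^p$ to $\tfrac{n+p-1}{p}=\tfrac{p-1}{p}+\tfrac np$ produces exactly
\[
\sum_{n=1}^N a_n\alpha_n^{p-1}\;\ge\;\frac{p-1}{p}\sum_{n=1}^N\alpha_n^p+\frac Np\,\alpha_N^p\;\ge\;\frac{p-1}{p}\sum_{n=1}^N\alpha_n^p.
\]
The H\"older step, the division by $\bigl(\sum_{n\le N}\alpha_n^p\bigr)^{(p-1)/p}$ (trivially disposing of the degenerate case where this vanishes), and the passage $N\to\infty$ are all sound. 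On sharpness, the test family $a_n=n^{-1/p-\varepsilon}$ is the right one, and the concern you flag about the lower-order terms in $A_n$ is real but resolvable: writing $s=1/p+\varepsilon$, Euler--Maclaurin gives $A_n=\tfrac{n^{1-s}}{1-s}+O(1)$, so $\alpha_n^p=\tfrac{n^{-sp}}{(1-s)^p}\bigl(1+O(n^{s-1})\bigr)$; the error series $\sum n^{-sp+s-1}$ converges uniformly in $\varepsilon$ near $0$ since its exponent is bounded away from $-1$, while the main terms $\sum n^{-sp}$ and $\sum n^{-sp}/(1-s)^p$ both blow up as $\varepsilon\to 0^+$, so the ratio of the two series tends to $(1-1/p)^{-p}=\bigl(\tfrac{p}{p-1}\bigr)^p$. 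In short: correct proof, and the one essentially everyone uses.
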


In 1926, Copson generalized in \cite{1} Theorem \ref{thm:A}, by replacing the arithmetic mean of a sequence by a weighted arithmetic mean. More precisely, he proved the following

\begin{theoremA} \label{thm:B}
Let $p>1,\ a_n,\lambda_n>0$ for $n=1, 2, \ldots$. Further suppose that $\Lambda_n=\sum_{i=1}^n\lambda_i$ and $A_n=\sum_{i=1}^n\lambda_ia_i$. Then
\begin{equation} \label{eq:1p2}
\sum_{n=1}^\infty\lambda_n\left(\frac{A_n}{\Lambda_n}\right)^p \leq \left(\frac{p}{p-1}\right)^p\sum_{n=1}^\infty\lambda_na_n^p,
\end{equation}
where the constant involved in \eqref{eq:1p2} is best possible.
\end{theoremA}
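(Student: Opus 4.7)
The plan is to adapt the classical telescoping proof of Theorem~\ref{thm:A} to the weighted setting. Introducing the weighted means $M_n := A_n/\Lambda_n$ (with the convention $\Lambda_0 := M_0 := 0$), the identity $\lambda_n a_n = A_n - A_{n-1}$ takes the form $\lambda_n a_n = \Lambda_n M_n - \Lambda_{n-1}M_{n-1}$. The technical heart of the argument will be the pointwise estimate
$$ \lambda_n M_n^p \;+\; \frac{1}{p-1}\bigl[\Lambda_n M_n^p - \Lambda_{n-1}M_{n-1}^p\bigr] \;\leq\; \frac{p}{p-1}\,\lambda_n a_n M_n^{p-1} \qquad (n \geq 1),$$
in which the bracketed term is built to telescope when summed in $n$.

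To derive this estimate I would substitute $\lambda_n a_n = \Lambda_n M_n - \Lambda_{n-1}M_{n-1}$ on the right, obtaining $\tfrac{p\Lambda_n}{p-1}M_n^p - \tfrac{p\Lambda_{n-1}}{p-1}M_{n-1}M_n^{p-1}$, and control the mixed term by the scalar Young inequality $p\,M_{n-1}M_n^{p-1} \leq M_{n-1}^p + (p-1)M_n^p$ (i.e.\ $AB \leq A^p/p + B^q/q$ with $A = M_{n-1}$, $B = M_n^{p-1}$, $q = p/(p-1)$). Using the algebraic identity $p\Lambda_n - (p-1)\Lambda_{n-1} = \Lambda_n + (p-1)\lambda_n$ to regroup the $M_n^p$ coefficient produces the displayed inequality. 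Summing from $n=1$ to $N$ collapses the telescoping bracket into the nonnegative quantity $\tfrac{1}{p-1}\Lambda_N M_N^p$, which I discard to obtain
$$\sum_{n=1}^{N}\lambda_n M_n^p \;\leq\; \frac{p}{p-1}\sum_{n=1}^{N}\lambda_n a_n M_n^{p-1}.$$
A single Hölder estimate with conjugate exponents $p$ and $q = p/(p-1)$, applied to the factorisation $\lambda_n a_n M_n^{p-1} = (\lambda_n^{1/p}a_n)(\lambda_n^{1/q}M_n^{p-1})$ and using $(p-1)q = p$, bounds the right side by $\tfrac{p}{p-1}\bigl(\sum_{n=1}^N \lambda_n a_n^p\bigr)^{1/p}\bigl(\sum_{n=1}^N \lambda_n M_n^p\bigr)^{1/q}$; dividing the (finite) truncated sum on the left by the common factor $\bigl(\sum_{n=1}^N \lambda_n M_n^p\bigr)^{1/q}$, raising to the $p$-th power, and sending $N \to \infty$ by monotone convergence yields \eqref{eq:1p2}.

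The main obstacle I anticipate is keeping the constant sharp: both the coefficient $1/(p-1)$ of the telescoping bracket and the scalar Young inequality must be used at their extremal balance, since any slack in either would inflate the final constant above $(p/(p-1))^p$. The truncation in $N$ is what legitimises the division step; the passage $N\to\infty$ is then automatic. For the sharpness claim, I would test \eqref{eq:1p2} on discrete analogues of the extremal $f(x) = x^{-1/p}$ of the continuous Hardy inequality, concretely the one-parameter family $a_i = \Lambda_i^{-\sigma}$ with $\sigma \searrow 1/p$; using the integral comparison $\sum_{i \leq N}\lambda_i \Lambda_i^{-\beta} \sim \int_0^{\Lambda_N} t^{-\beta}\,dt$ to obtain $M_n \sim \tfrac{1}{1-\sigma}\Lambda_n^{-\sigma}$ asymptotically, I would show that the ratio of the two sides of \eqref{eq:1p2} tends to $(p/(p-1))^p$.
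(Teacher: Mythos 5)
Your argument is correct, and since the paper states Theorem~\ref{thm:B} as a classical result of Copson without reproducing a proof, there is no in-paper argument to compare against; what you give is essentially the standard telescoping proof from Hardy--Littlewood--P\'olya adapted to weights. The pointwise estimate checks out: after substituting $\lambda_n a_n = \Lambda_n M_n - \Lambda_{n-1}M_{n-1}$ and simplifying the coefficient of $M_n^p$ via $\tfrac{p}{p-1}\Lambda_n - \lambda_n - \tfrac{1}{p-1}\Lambda_n = \Lambda_{n-1}$, the claim reduces (for $n\geq 2$, after dividing by $\Lambda_{n-1}/(p-1)>0$) to the scalar inequality $pM_{n-1}M_n^{p-1}\leq M_{n-1}^p+(p-1)M_n^p$, which is Young's inequality as you say; for $n=1$ the estimate holds with equality since $M_0=0$ and $M_1=a_1$. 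The telescoping, the discard of the nonnegative boundary term $\tfrac{1}{p-1}\Lambda_N M_N^p$, the H\"older step with $(p-1)q=p$, and the division by the finite truncated sum are all in order, and the sharpness sketch via $a_i\sim\Lambda_i^{-\sigma}$ with $\sigma\searrow 1/p$ is the standard extremal family. One small point worth making explicit when writing this up: the division step requires $\sum_{n=1}^{N}\lambda_n M_n^p>0$, which is automatic from $\lambda_n,a_n>0$, and the passage $N\to\infty$ uses only monotonicity of the partial sums, so no integrability hypothesis beyond $\sum\lambda_n a_n^p<\infty$ is needed.
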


Certain generalizations of \eqref{eq:1p1} have been given in \cite{6}, \cite{7} and elsewhere. For example, one can see in \cite{8} further generalizations of Hardy's and Copson's inequalities be replacing means by more general linear transforms. Theorem \ref{thm:A} has a continued analogue which is the following

\begin{theoremA}  \label{thm:C}
If $p>1$ and $f: [0,+\infty)\to\mathbb R^+$ is $L^p$-integrable, then
\begin{equation} \label{eq:1p3}
\int_0^\infty \left(\frac 1 t \int_0^t f(u)\,\mathrm du\right)^p\mathrm dt \leq \left(\frac{p}{p-1}\right)^p\int_0^\infty f^p(t)\,\mathrm dt.
\end{equation}
The constant in the right side of \eqref{eq:1p3} is best possible.
\end{theoremA}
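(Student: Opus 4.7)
The plan is to prove Theorem \ref{thm:C} by the classical combination of an integration by parts with H\"older's inequality. Set $F(t)=\int_0^t f(u)\,\mathrm du$, $I=\int_0^\infty (F(t)/t)^p\,\mathrm dt$ and $J=\int_0^\infty f^p(t)\,\mathrm dt$. We may assume $J<\infty$, and by first truncating and replacing $f$ by $f_n=f\cdot\mathbf{1}_{[1/n,n]}$ we may further reduce to the case where $f$ is bounded with compact support in $(0,\infty)$, so that all boundary terms arising below are rigorously justified; the general case is then recovered by monotone convergence, since the constant $(p/(p-1))^p$ does not depend on the truncation.

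First I would integrate by parts in $I$ with $u=F(t)^p$ and $\mathrm dv=t^{-p}\,\mathrm dt$, obtaining
\begin{equation*}
I=\left[\frac{-F(t)^p}{(p-1)\,t^{p-1}}\right]_0^\infty+\frac{p}{p-1}\int_0^\infty\left(\frac{F(t)}{t}\right)^{p-1}f(t)\,\mathrm dt.
\end{equation*}
Under the truncation hypothesis, $F(t)=0$ for $t<1/n$ so the lower boundary contribution vanishes, and $F(t)$ is bounded while $t^{p-1}\to\infty$ so the upper contribution vanishes as well. I would then apply H\"older's inequality with conjugate exponents $p/(p-1)$ and $p$ to the remaining integral, obtaining
\begin{equation*}
I\leq\frac{p}{p-1}\,I^{(p-1)/p}\,J^{1/p}.
\end{equation*}
Provided $0<I<\infty$, dividing by $I^{(p-1)/p}$ and raising the result to the $p$-th power yields the desired bound; the degenerate cases $I=0$ or $I=\infty$ are handled either trivially or by the truncation limit.

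The sharpness of the constant is established by a standard extremizing family: testing \eqref{eq:1p3} on $f_{\varepsilon,R}(t)=t^{-1/p-\varepsilon}\mathbf{1}_{[1,R]}(t)$, computing both sides as elementary integrals, and letting $\varepsilon\to 0^+$ followed by $R\to\infty$ shows that the ratio of the two sides approaches $(p/(p-1))^p$, so no smaller constant can work.

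The main obstacle I expect is the clean justification of the boundary terms in the integration by parts. For a general $L^p$ datum, $F(t)/t$ need not have a classical limit at $0$, and one must argue that $F(t)^p/t^{p-1}\to 0$ at both endpoints. Both facts are immediate for truncated $f$, but packaging the passage from truncated to general $f$ (while simultaneously excluding the trivial case $I=\infty$, which must be ruled out a posteriori from the inequality itself) is the delicate bookkeeping step.
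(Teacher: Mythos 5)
Your argument is correct, but it takes a genuinely different route from the one the paper relies on. The paper does not prove Theorem~\ref{thm:C} directly: it states that Theorems~\ref{thm:A} and~\ref{thm:C} are equivalent via ``standard approximation arguments which involve step functions,'' i.e., the continuous Hardy inequality is obtained as a limiting case of the discrete one, and the discrete one is taken from the literature. You instead give a self-contained analytic proof of the continuous inequality: integration by parts to move one power of $F(t)=\int_0^t f$ onto $f$, then H\"older with exponents $p/(p-1)$ and $p$, then the absorption step $I\le \frac{p}{p-1}I^{(p-1)/p}J^{1/p}$. This is the classical ``Hardy--Landau'' proof and it is the cleaner choice here, since it avoids any reference to Theorem~\ref{thm:A} and handles the boundary terms and the degenerate cases explicitly; the paper's route is preferred in the paper only because it also wants the weighted discrete form (Theorem~\ref{thm:B}) for the later arguments.

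Two small points of care. First, restricting $f$ to $[1/n,n]$ does not make it bounded (only compactly supported); if you want boundedness too, take $f_n=\min(f,n)\,\mathbf 1_{[1/n,n]}$, which is still a monotone increasing truncation and so the monotone convergence passage survives. In fact compact support alone already kills both boundary terms (near $0$ because $F\equiv 0$ there; near $\infty$ because $F$ is constant while $t^{p-1}\to\infty$) and already gives $I<\infty$, so boundedness is not actually needed in your argument. Second, in the sharpness computation, both orders of limit ($\varepsilon\to 0^+$ then $R\to\infty$, or the reverse) do work, but the computation is slightly cleaner if you fix $\varepsilon>0$, send $R\to\infty$ to get the ratio $\bigl(1-\tfrac1p-\varepsilon\bigr)^{-p}$, and then send $\varepsilon\to 0^+$; as written you should at least note that the double limit is well-behaved. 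Neither of these affects the soundness of the proof.
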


It is easy to see that Theorems \ref{thm:A} and \ref{thm:C} are equivalent, by standard approximation arguments which involve step functions. Now as one can see in \cite{4}, there is a continued analogue of \eqref{eq:1p3} for negative exponents, which is presented there without a proof. This is described in the following

\begin{theoremA} \label{thm:D}
Let $f: [a,b]\to\mathbb R^+$. Then for every $p>0$ the following is true
\begin{equation} \label{eq:1p4}
\int_a^b\left(\frac{1}{t-a}\int_a^tf(u)\,\mathrm du\right)^{-p}\mathrm dt \leq \left(\frac{p+1}{p}\right)^p\int_a^bf^{-p}(t)\,\mathrm dt.
\end{equation}
Moreover \eqref{eq:1p4} is best possible.
\end{theoremA}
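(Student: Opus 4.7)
The plan is to reduce inequality \eqref{eq:1p4} to the classical Hardy inequality (Theorem~\ref{thm:C}) applied at exponent $p+1$, via the substitution $v=F(t)$, where $F(t)=\int_a^t f(u)\,du$.

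\emph{Reduction.} By translation we may assume $a=0$; also we may take $f>0$ almost everywhere (otherwise the right-hand side of \eqref{eq:1p4} is infinite and the statement is trivial). Then $F$ is absolutely continuous and strictly increasing, mapping $[0,b]$ onto $[0,M]$, where $M=F(b)$; let $\tau:=F^{-1}$, so that $\tau'(v)=1/f(\tau(v))$. The substitution $v=F(t)$ transforms \eqref{eq:1p4} into
\begin{equation*}
\int_0^M\left(\frac{\tau(v)}{v}\right)^{p}\tau'(v)\,dv\;\leq\;\left(\frac{p+1}{p}\right)^{p}\int_0^M\tau'(v)^{p+1}\,dv.
\end{equation*}

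\emph{Main estimate.} Setting $g:=\tau'$, we have $\tau(v)=\int_0^v g(u)\,du$, so the quotient $\tau(v)/v$ equals the Hardy average $\mathcal{A}g(v):=\frac{1}{v}\int_0^v g(u)\,du$, and the previous display becomes
\begin{equation*}
\int_0^M(\mathcal{A}g)(v)^{p}\,g(v)\,dv\;\leq\;\left(\frac{p+1}{p}\right)^{p}\int_0^M g(v)^{p+1}\,dv.
\end{equation*}
I would now apply H\"older's inequality with conjugate exponents $r=(p+1)/p$ and $s=p+1$ to the factors $(\mathcal{A}g)^{p}$ and $g$, obtaining
\begin{equation*}
\int_0^M(\mathcal{A}g)^{p} g\,dv\;\leq\;\Bigl(\int_0^M(\mathcal{A}g)^{p+1}\,dv\Bigr)^{p/(p+1)}\Bigl(\int_0^M g^{p+1}\,dv\Bigr)^{1/(p+1)},
\end{equation*}
and then invoke Theorem~\ref{thm:C} at exponent $p+1>1$ to bound the first factor by $((p+1)/p)^{p+1}\int_0^M g^{p+1}\,dv$. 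Raising this to the power $p/(p+1)$ and combining with the remaining factor produces exactly the constant $((p+1)/p)^{p}$ on the right-hand side, as desired.

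\emph{Sharpness.} For the optimality of the constant I would test the family $f_\varepsilon(t)=(t-a)^{1/p-\varepsilon}$ on $[a,b]$ for small $\varepsilon>0$. A direct computation gives $F_\varepsilon(t)=\big((p+1)/p-\varepsilon\big)^{-1}(t-a)^{(p+1)/p-\varepsilon}$, and both integrands in \eqref{eq:1p4} become constant multiples of $(t-a)^{-1+p\varepsilon}$; consequently the ratio of the two sides equals $((p+1)/p-\varepsilon)^{p}$, which tends to $((p+1)/p)^{p}$ as $\varepsilon\to 0^+$. The only technical point is the validity of the change of variable when $f$ is merely measurable; this is immediate for continuous strictly positive $f$, and the general case follows by approximating $f$ from above by such functions $f_n\downarrow f$ and passing to the limit by monotone convergence on both sides.
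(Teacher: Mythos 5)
Your proof is correct, and it takes a genuinely different route from the paper. The paper never proves Theorem~\ref{thm:D} in isolation; rather it obtains it (and its generalization, Theorem~\ref{thm:E}) as a consequence of the stronger Theorems~\ref{thm:1}--\ref{thm:2}, which are established by a discrete reduction built on two lemmas: a pointwise Young-type inequality (Lemma~\ref{lem:1}) and an inductive summation estimate (Lemma~\ref{lem:2}), followed by a H\"older step and a delicate minimization of the function $G$. By contrast, you make the substitution $v=F(t)$, $\tau=F^{-1}$, which converts the negative-exponent inequality directly into the statement $\int(\mathcal{A}g)^p\,g \leq \bigl(\tfrac{p+1}{p}\bigr)^p\int g^{p+1}$ with $g=\tau'$, and you then dispose of this by one application of H\"older (exponents $\tfrac{p+1}{p}$, $p+1$) and the classical Hardy inequality (Theorem~\ref{thm:C}) at exponent $p+1$. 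This is shorter and conceptually cleaner for Theorem~\ref{thm:D} itself, and it makes transparent the duality between the negative-exponent and classical positive-exponent Hardy inequalities. What it does not buy is the refined inequality \eqref{eq:1p7}: the H\"older step discards exactly the information that the paper's argument retains in order to produce the extra term $-\tfrac{q}{p+1}(b-a)\ell^{-p}$, and it is not clear how to recover that refinement via the change-of-variables route. Your sharpness family $f_\varepsilon(t)=(t-a)^{1/p-\varepsilon}$ is, up to normalization, the same power-function family the paper uses ($g_a(t)=\ell(1-a)t^{-a}$ with $a\to(-1/p)^+$), and your computation of the ratio is correct. One minor caveat: the monotone-approximation remark at the end (approximating a measurable $f$ from above by continuous strictly positive $f_n\downarrow f$) is fine in the Riemann-integrable setting the paper works in, but for a general nonnegative measurable $f$ such a continuous approximating sequence need not exist; it would be cleaner to either invoke the Lebesgue change-of-variables theorem for the absolutely continuous, strictly increasing $F$ directly, or to note (as the paper does) that one may restrict attention to the Riemann-integrable case.
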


In \cite{9}, a generalization of \eqref{eq:1p4} has been given, which can be seen in the following
\begin{theoremA} \label{thm:E}
Let $p\geq q>0$ and $f: [a,b]\to\mathbb R^+$. The following inequality is true and sharp
\begin{equation} \label{eq:1p5}
\int_a^b\left(\frac{1}{t-a}\int_a^tf(u)\,\mathrm du\right)^{-\mathrlap{p}}\mathrm dt \leq \left(\frac{p+1}{p}\right)^q\int_a^b\left(\frac{1}{t-a}\int_a^tf(u)\,\mathrm du\right)^{-p\mathrlap{+q}}f^{-q}(t)\,\mathrm dt.
\end{equation}
\end{theoremA}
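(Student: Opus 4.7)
The plan is to derive \eqref{eq:1p5} by combining a sharp integration-by-parts identity with a pointwise H\"older factorization, which effectively interpolates between the trivial case $q = 0$ and a reverse Hardy-type estimate. Write $F(t) = \int_a^t f(u)\,\mathrm du$ and $g(t) = F(t)/(t-a)$, so that $(t-a)\,g'(t) = f(t) - g(t)$. Integrating $\int_a^b g^{-p}\,\mathrm dt$ by parts with $u = g^{-p}$ and $v = t-a$, and substituting $(t-a)g' = f - g$, yields the identity
$$(p+1)\int_a^b g(t)^{-p}\,\mathrm dt \;=\; (b-a)\,g(b)^{-p} \;+\; p\int_a^b g(t)^{-p-1} f(t)\,\mathrm dt,$$
with the boundary term at $a$ vanishing because $(t-a)\,g(t)^{-p} \to 0$ as $t \to a^+$. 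Dropping the nonnegative boundary contribution at $b$ gives the reverse-Hardy estimate
$$\int_a^b g(t)^{-p-1} f(t)\,\mathrm dt \;\le\; \frac{p+1}{p}\int_a^b g(t)^{-p}\,\mathrm dt. \qquad (\star)$$

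The second step is the pointwise factorization
$$g(t)^{-p} \;=\; \bigl(g(t)^{-p-1} f(t)\bigr)^{q/(1+q)}\,\bigl(g(t)^{-p+q} f(t)^{-q}\bigr)^{1/(1+q)},$$
which follows at once from checking the exponents (those of $f$ telescope to $0$, and those of $g$ collapse to $-p(q+1)/(1+q) = -p$). Applying H\"older's inequality with conjugate exponents $(1+q)/q$ and $1+q$ yields
$$\int_a^b g^{-p}\,\mathrm dt \;\le\; \Bigl(\int_a^b g^{-p-1} f\,\mathrm dt\Bigr)^{q/(1+q)}\Bigl(\int_a^b g^{-p+q} f^{-q}\,\mathrm dt\Bigr)^{1/(1+q)}.$$
Inserting $(\star)$ into the first factor, dividing through by $\bigl(\int_a^b g^{-p}\,\mathrm dt\bigr)^{q/(1+q)}$, and raising to the $(1+q)$-th power produces exactly \eqref{eq:1p5}; setting $q = p$ recovers Theorem~D.

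The main technical obstacle I anticipate is the rigorous justification of these manipulations for a general positive measurable $f\colon[a,b]\to\mathbb R^+$: the absolute continuity required for integration by parts, the vanishing of the boundary term at $a$, and the legitimacy of dividing by $\int_a^b g^{-p}\,\mathrm dt$ (which might be $0$ or $\infty$). I would treat this by first proving \eqref{eq:1p5} under the additional assumption that $f$ is bounded and bounded away from $0$ on $[a,b]$ --- so that $g$ is continuous, strictly positive, and all integrals in sight are finite --- and then recovering the general case by truncation $f_n = (f\wedge n)\vee(1/n)$ together with monotone convergence. For the sharpness of the constant $((p+1)/p)^q$, I would test the power family $f(t) = (t-a)^\beta$ with $\beta < 1/p$: a direct computation gives $g(t) = (t-a)^\beta/(\beta+1)$, both sides of \eqref{eq:1p5} are finite, and their ratio equals $\bigl((\beta+1)p/(p+1)\bigr)^q$, which tends to $1$ as $\beta \uparrow 1/p$.
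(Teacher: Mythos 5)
Your proof is correct, and it takes a genuinely different route from the paper's. The paper (following \cite{9}, and as reflected in the proof of Theorem~\ref{thm:2}) works on the discrete side: it first establishes a pointwise Young-type inequality (Lemma~\ref{lem:1}), then proves a Hardy-type sum estimate by induction (Lemma~\ref{lem:2}), and finally combines these through H\"older's inequality applied to the auxiliary quantity $y=\sum \lambda_n a_n^{q/p}(A_n/\Lambda_n)^{-p-q/p}$ with conjugate exponents $p/q$ and $p/(p-q)$; the continuous case then follows by Riemann-sum approximation. Your argument instead stays in the continuous setting, obtains the reverse-Hardy estimate $(\star)$ in one stroke from the exact integration-by-parts identity
\[
(p+1)\int_a^b g^{-p}\,\mathrm dt = (b-a)\,g(b)^{-p}+p\int_a^b g^{-p-1}f\,\mathrm dt,
\]
and then uses a different pointwise factorization $g^{-p}=(g^{-p-1}f)^{q/(1+q)}(g^{-p+q}f^{-q})^{1/(1+q)}$ together with H\"older with conjugate exponents $(1+q)/q$ and $1+q$. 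This has two concrete advantages. First, it is shorter and avoids the discrete induction entirely. Second, your H\"older exponents are always legal for $q>0$, so you never need $p\geq q$; the paper's choice $p/q$ and $p/(p-q)$ forces $p\geq q$ and requires a separate continuity argument for $p=q$, whereas your argument establishes the inequality (with the same sharp constant) for every $p,q>0$. Your sharpness test $f(t)=(t-a)^\beta$, $\beta\uparrow 1/p$, is essentially the same family the paper uses for Theorem~\ref{thm:1}, and the regularization $f_n=(f\wedge n)\vee(1/n)$ you propose correctly handles the technical points about absolute continuity, the vanishing boundary term at $a$, and the division by $\int_a^b g^{-p}$. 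One small remark: the identity already makes the paper's refinement~\eqref{eq:1p7} visible, since keeping the boundary term $(b-a)g(b)^{-p}=(b-a)\ell^{-p}$ instead of discarding it would let your method recover a remainder of the same flavour.
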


What is proved in fact in \cite{9} is a more general weighted discrete analogue of \eqref{eq:1p5} which is given in the following
\begin{theoremA} \label{thm:F}
Let $p\geq q>0$ and $a_n,\lambda_n>0$ for $n=1, 2, \ldots$. Define $A_n$ and $\Lambda_n$ as in Theorem \ref{thm:B}. Then
\begin{equation} \label{eq:1p6}
\sum_{n=1}^\infty\lambda_n\left(\frac{A_n}{\Lambda_n}\right)^{-p} \leq \left(\frac{p+1}{p}\right)^q\sum_{n=1}^\infty \lambda_n\left(\frac{A_n}{\Lambda_n}\right)^{-p+q}a_n^{-q}.
\end{equation}
\end{theoremA}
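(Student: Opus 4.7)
The strategy is to mirror, in the discrete setting, the two-step argument that underlies the continuous analogue \eqref{eq:1p5}: first establish a Copson-type dual inequality, then upgrade it to \eqref{eq:1p6} via Jensen's inequality. Abbreviate $M_n = A_n/\Lambda_n$ throughout.

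The first step is to prove that, for every $N \geq 1$,
$$\sum_{n=1}^N \lambda_n a_n M_n^{-p-1} \leq \frac{p+1}{p} \sum_{n=1}^N \lambda_n M_n^{-p}. \qquad (\ast)$$
Starting from the Abel summation identity (with $\Lambda_0 = 0$),
$$\sum_{n=1}^N \lambda_n M_n^{-p} = \Lambda_N M_N^{-p} + \sum_{n=1}^{N-1} \Lambda_n (M_n^{-p} - M_{n+1}^{-p}),$$
I would apply the tangent line inequality for the convex function $x \mapsto x^{-p}$ at $M_{n+1}$, yielding $M_n^{-p} - M_{n+1}^{-p} \geq p M_{n+1}^{-p-1}(M_{n+1} - M_n)$, together with the elementary identity $\Lambda_n(M_{n+1} - M_n) = \lambda_{n+1}(a_{n+1} - M_{n+1})$ (immediate from $\Lambda_{n+1}M_{n+1} = \Lambda_n M_n + \lambda_{n+1} a_{n+1}$). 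After noting that the missing $n=1$ contribution $\lambda_1 M_1^{-p-1}(a_1 - M_1)$ vanishes since $M_1 = a_1$, one reaches $(p+1)\sum_{n=1}^N \lambda_n M_n^{-p} \geq \Lambda_N M_N^{-p} + p \sum_{n=1}^N \lambda_n a_n M_n^{-p-1}$, from which $(\ast)$ follows by dropping the nonnegative term $\Lambda_N M_N^{-p}$.

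For the second step, introduce the discrete measure $d\mu_n := \lambda_n M_n^{-p}$ on $\{1,\dots,N\}$ and the ratios $u_n := a_n/M_n$. Then $(\ast)$ says that the average of $u_n$ with respect to the probability measure proportional to $d\mu_n$ is at most $(p+1)/p$. Since $x \mapsto x^{-q}$ is convex on $(0,\infty)$ for $q > 0$, Jensen's inequality combined with the monotonicity of $x \mapsto x^{-q}$ produces
$$\frac{\sum_n u_n^{-q}\, d\mu_n}{\sum_n d\mu_n} \geq \left(\frac{\sum_n u_n\, d\mu_n}{\sum_n d\mu_n}\right)^{-q} \geq \left(\frac{p+1}{p}\right)^{-q}.$$
Since $u_n^{-q}\, d\mu_n = \lambda_n M_n^{-p+q} a_n^{-q}$, rearrangement yields \eqref{eq:1p6} for each partial sum, and the full result follows in the limit $N \to \infty$.

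I expect the Abel summation step to be the main technical obstacle: one must pick the correct side of the convexity inequality (the tangent at $M_{n+1}$, not at $M_n$, the latter producing the reverse direction) and reassemble the resulting sum correctly, exploiting the coincidence $M_1 = a_1$ to absorb the $n=1$ boundary term. The hypothesis $p \geq q$, incidentally, plays no role in this scheme; only $q > 0$ is used, through the convexity of $x \mapsto x^{-q}$.
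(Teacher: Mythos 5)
Your proof is correct and takes a genuinely different route from the paper's. The paper actually proves the stronger Theorem~\ref{thm:2} (which retains the boundary term $\frac{q}{p+1}\Lambda_N(A_N/\Lambda_N)^{-p}$) and deduces Theorem~\ref{thm:F} by discarding it. The paper's path runs through two lemmas: Lemma~\ref{lem:2}, the dual Copson inequality
\[
(p+1)\sum_{n=1}^N\lambda_n\Bigl(\tfrac{A_n}{\Lambda_n}\Bigr)^{-p}-p\sum_{n=1}^N\lambda_n a_n\Bigl(\tfrac{A_n}{\Lambda_n}\Bigr)^{-p-1}\ \geq\ \Lambda_N\Bigl(\tfrac{A_N}{\Lambda_N}\Bigr)^{-p},
\]
proved there by induction, and Lemma~\ref{lem:1}, a pointwise Young-type inequality, which are then glued with H\"older's inequality at exponents $p/q$ and $p/(p-q)$ and a calculus analysis of an auxiliary function $G$. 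Your Step~1 re-derives exactly the inequality of Lemma~\ref{lem:2} (before dropping $\Lambda_N M_N^{-p}$), but by Abel summation plus the tangent-line bound for the convex map $x\mapsto x^{-p}$ rather than by induction; this is arguably cleaner and more transparent about where each term comes from. Your Step~2 then replaces the paper's entire Lemma~\ref{lem:1}~+~H\"older~+~$G$-analysis block with a single application of Jensen's inequality for the convex decreasing map $x\mapsto x^{-q}$ against the probability measure proportional to $\lambda_n M_n^{-p}$, which is substantially shorter. Two genuine advantages of your route: it avoids H\"older entirely, so the restriction $p\geq q$ (needed in the paper to make $p/q\geq1$, with $p=q$ handled by a limit) is dispensable and the inequality emerges for all $p,q>0$; and it sidesteps the limiting argument for $\inf G$. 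The trade-off is that by dropping $\Lambda_N M_N^{-p}$ at the end of Step~1 you obtain only Theorem~\ref{thm:F}, not the refinement \eqref{eq:1p8} of Theorem~\ref{thm:2} which the paper's heavier machinery is designed to deliver; retaining that term and feeding it into Jensen would require, in place of the last monotonicity step, the elementary bound $(1-t)^{-q}\geq 1+qt$ for $t\in(0,1)$, $q>0$, which in fact recovers \eqref{eq:1p8} as well, though you did not pursue this.
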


Certain applications exist for the above two theorems. One of them can be seen in \cite{9}, concerning Muckenhoupt weights. In this paper we generalize and refine inequality \eqref{eq:1p5} by specifying the integral of $f$ over $[a,b]$. We also assume, for simplicity reasons, that $f$ is Riemann integrable on $[a,b]$. More precisely we will prove the following
\begin{theorem} \label{thm:1}
Let $p\geq q>0$ and $f: [a,b]\to \mathbb R^+$ with $\frac{1}{b-a}\int_a^bf=\ell$. Then the following inequality is true
\begin{multline} \label{eq:1p7}
\int_a^b\left(\frac{1}{t-a}\int_a^tf(u)\,\mathrm du\right)^{\mathrlap{-p}}\mathrm dt \leq \left(\frac{p+1}{p}\right)^q\int_a^b\left(\frac{1}{t-a}\int_a^tf(u)\,\mathrm du\right)^{-\mathrlap{p+q}}f^{-q}(t)\,\mathrm dt - \\
- \frac{q}{p+1}(b-a)\cdot\ell^{-p}.
\end{multline}
Moreover, inequality \eqref{eq:1p7} is sharp if one considers all weights $f$ that have mean integral average over $[a,b]$ equal to $\ell$.
\end{theorem}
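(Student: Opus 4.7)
The plan is to introduce the auxiliary function
\[
h(t)=\left(\frac{p+1}{p}\right)^{\!q}\!\int_{a}^{t} F(s)^{-p+q}f(s)^{-q}\,\mathrm ds-\frac{q}{p+1}(t-a)F(t)^{-p},\qquad t\in[a,b],
\]
where $F(t)=\frac{1}{t-a}\int_{a}^{t}f(u)\,\mathrm du$, and to establish the differential inequality $h'(t)\geq F(t)^{-p}$ pointwise on $(a,b)$. Granted this, since $h(a^{+})=0$ (the boundary term $(t-a)F^{-p}$ vanishes at $a$ whenever $F$ stays bounded away from $0$ there, which can be arranged by a standard truncation-and-approximation argument) and $F(b)=\ell$, integration from $a$ to $b$ yields \eqref{eq:1p7} directly.

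The derivative computation uses the identity $(t-a)F'(t)=f(t)-F(t)$, which gives
\[
\frac{\mathrm d}{\mathrm dt}\bigl[(t-a)F^{-p}\bigr]=(p+1)F^{-p}-pF^{-p-1}f,
\]
and hence
\[
h'(t)=\left(\frac{p+1}{p}\right)^{\!q}F^{-p+q}f^{-q}-qF^{-p}+\frac{pq}{p+1}F^{-p-1}f.
\]
Dividing by $F(t)^{-p}$ and writing $x=f(t)/F(t)$, the desired $h'\geq F^{-p}$ becomes the one-variable inequality
\[
\left(\frac{p+1}{p}\right)^{\!q}x^{-q}+\frac{pq}{p+1}\,x\geq q+1,\qquad x>0.
\]
This is a weighted Young-type inequality: differentiating in $x$ gives a unique critical point at $x=(p+1)/p$, and direct substitution shows that the minimum value equals exactly $q+1$. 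This one-variable optimization is the analytic heart of the argument.

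For sharpness, note that equality in the above pointwise inequality forces $f(t)/F(t)\equiv(p+1)/p$, i.e.\ $f(t)\propto(t-a)^{1/p}$. These extremal profiles make both sides of \eqref{eq:1p7} divergent, so subject to the mean constraint one should test the regularised family $f_{n}(t)=c_{n}(t-a)^{1/p-\delta_{n}}$ with $\delta_{n}\downarrow 0$ and $c_{n}$ chosen so that $\frac{1}{b-a}\int_{a}^{b}f_{n}=\ell$. An explicit asymptotic computation shows that both sides of \eqref{eq:1p7} diverge like $1/\delta_{n}$ while their difference converges to $-\frac{q}{p+1}(b-a)\ell^{-p}$, so the subtracted constant is optimal. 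The main obstacle throughout is identifying the correct shape of $h$: the extra boundary term $-\frac{q}{p+1}(t-a)F(t)^{-p}$ is precisely what absorbs the $\ell$-dependent correction, reducing the problem to a routine derivative computation plus an elementary one-variable inequality.
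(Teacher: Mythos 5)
Your proposal is correct, and it takes a genuinely different and more direct route than the paper. The paper first proves the discrete Theorem~\ref{thm:2} by an induction argument (Lemmas~\ref{lem:1} and~\ref{lem:2}), followed by H\"older's inequality with exponents $p/q$ and $p/(p-q)$ and an optimization of an auxiliary function $G(x)=x-\bigl[x-\tfrac{c}{p+1}\bigr]^{q/p}x^{1-q/p}$, and then deduces Theorem~\ref{thm:1} by approximating the integrals with Riemann sums. Your argument bypasses all of this: you integrate the pointwise differential inequality $h'(t)\geq F(t)^{-p}$, which after the substitution $x=f/F$ reduces to the single-variable minimization $\bigl(\tfrac{p+1}{p}\bigr)^q x^{-q}+\tfrac{pq}{p+1}x\geq q+1$; I checked the critical point $x^*=(p+1)/p$, the minimum value $q+1$, and the derivative identity $\tfrac{d}{dt}[(t-a)F^{-p}]=(p+1)F^{-p}-pF^{-p-1}f$, and they are all correct. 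After the rescaling $u=\tfrac{p}{p+1}x$ your scalar inequality is exactly the paper's \eqref{eq:2p3} with exponent $q$ in place of $p$, so the ingredient is the same Young-type bound, but you use it once and directly rather than through Lemmas~\ref{lem:1}, \ref{lem:2}, H\"older, and the limit of $G$. A noteworthy dividend of your approach is that nowhere does it use $p\geq q$ (the paper's H\"older step does), which suggests the stated restriction is unnecessary for Theorem~\ref{thm:1}; you may wish to remark on this explicitly. The sharpness argument via $f_n(t)=c_n(t-a)^{1/p-\delta_n}$ with $\delta_n\downarrow 0$ is, after the identification $-a=1/p-\delta_n$, the same extremal family $g_a(t)=\ell(1-a)t^{-a}$ that the paper uses, so that part is equivalent. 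The only point that needs slightly more care than you give it is the boundary behaviour at $t=a^+$: you actually need only $\limsup_{\epsilon\to 0^+}h(a+\epsilon)\leq 0$, which follows for free because the second summand of $h$ is being subtracted and is nonnegative, and the first summand tends to $0$ whenever the right-hand integral of \eqref{eq:1p7} is finite (if it is infinite the inequality is vacuous); this is cleaner than insisting $h(a^+)=0$ and avoids the truncation argument you invoke.
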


What we mean by noting that \eqref{eq:1p7} is sharp is the following: The constant in front of the integral on the right side cannot be decreased, while the one in front of $\ell^{-p}$ cannot be increased. These facts will be proved below. In fact more is true as can be seen in the following
\begin{theorem} \label{thm:2}
Let $p\geq q>0$ and $a_n,\lambda_n>0$, for every $n=1, 2, \ldots$. Define $A_n$ and $\Lambda_n$ as above. Then the following inequality holds for every $N\in\mathbb N$.
\begin{equation} \label{eq:1p8}
\sum_{n=1}^N\lambda_n\left(\frac{A_n}{\Lambda_n}\right)^{-p} \leq \left(\frac{p+1}{p}\right)^q\sum_{n=1}^N\lambda_n\left(\frac{A_n}{\Lambda_n}\right)^{-p\mathrlap{+q}}a_n^{-q} - \frac{q}{p+1}\Lambda_N\left(\frac{A_N}{\Lambda_N}\right)^{-p}.
\end{equation}
\end{theorem}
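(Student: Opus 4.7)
The plan is to establish, for each $n \geq 1$, a single per-index inequality that, when summed from $n=1$ to $N$, telescopes and produces exactly the boundary term $-\frac{q}{p+1}\Lambda_N(A_N/\Lambda_N)^{-p}$ on the right of \eqref{eq:1p8}. Writing $g_n := A_n/\Lambda_n$ and $\Phi_n := \Lambda_n g_n^{-p} = \Lambda_n^{p+1}A_n^{-p}$, with the convention $\Phi_0 := 0$, I will prove that for every $n$
\begin{equation*}
\lambda_n g_n^{-p} + \frac{q}{p+1}\bigl(\Phi_n - \Phi_{n-1}\bigr) \leq \left(\frac{p+1}{p}\right)^q \lambda_n g_n^{-p+q}\, a_n^{-q}.
\end{equation*}
Summing this over $n = 1, \ldots, N$ then telescopes $\sum(\Phi_n-\Phi_{n-1})$ into $\Phi_N$, producing \eqref{eq:1p8} directly; no induction on $N$ is required.

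To prove the per-index inequality I will reduce it through two successive applications of the weighted AM--GM inequality. The first step is the \emph{increment bound}
\begin{equation*}
\Phi_n - \Phi_{n-1} \leq (p+1)\lambda_n g_n^{-p} - p\lambda_n g_n^{-(p+1)}a_n.
\end{equation*}
After substituting $\lambda_n = \Lambda_n - \Lambda_{n-1}$ and $\lambda_n a_n = A_n - A_{n-1}$, dividing through by $\Lambda_n^{p+1}A_n^{-p}$, and setting $s := \Lambda_{n-1}/\Lambda_n \in [0,1)$ and $r := A_{n-1}/A_n \in [0,1)$, the increment bound is equivalent to $s^{p+1}r^{-p} + p r \geq (p+1)s$. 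This last inequality is weighted AM--GM with weights $\frac{1}{p+1}$ and $\frac{p}{p+1}$ applied to $(p+1)s^{p+1}r^{-p}$ and $(p+1)r$.

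Multiplying the increment bound by $\frac{q}{p+1}$ and adding $\lambda_n g_n^{-p}$ to both sides then reduces the per-index inequality to
\begin{equation*}
(1+q)\lambda_n g_n^{-p} - \frac{pq}{p+1}\lambda_n g_n^{-(p+1)}a_n \leq \left(\frac{p+1}{p}\right)^q \lambda_n g_n^{-p+q}a_n^{-q}.
\end{equation*}
After dividing by $\lambda_n g_n^{-p}$ and setting $t := a_n/g_n > 0$, this becomes $\left(\frac{p+1}{p}\right)^q t^{-q} + \frac{pq}{p+1}t \geq 1+q$, which is once again weighted AM--GM, now with weights $\frac{1}{q+1}$ and $\frac{q}{q+1}$ applied to $(q+1)\bigl(\frac{p+1}{p}\bigr)^q t^{-q}$ and $(q+1)\frac{p}{p+1}t$.

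The main obstacle is spotting the correct two-step decomposition. Once one recognises that the weight pair $\bigl(\frac{1}{p+1},\frac{p}{p+1}\bigr)$ must govern the first step and $\bigl(\frac{1}{q+1},\frac{q}{q+1}\bigr)$ the second, and verifies that the two AM--GM constants mesh so as to produce exactly the factors $\bigl(\frac{p+1}{p}\bigr)^q$ and $\frac{q}{p+1}$ demanded by \eqref{eq:1p8}, the remainder is elementary algebra; the degenerate case $n=1$ (where $\Lambda_{n-1}=A_{n-1}=0$) collapses to the inequality $1+\frac{q}{p+1}\leq\bigl(\frac{p+1}{p}\bigr)^q$, which is itself the $t=1$ specialisation of the second AM--GM.
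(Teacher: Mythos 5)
Your proof is correct and takes a genuinely different---and simpler---route than the paper. Your increment bound, once cleared of denominators, is exactly the inequality $s^{p+1}r^{-p}+pr\geq(p+1)s$, which is the paper's \eqref{eq:2p2} (equivalently, Young's inequality $y^{-p}+py\geq p+1$); summing it over $n$ recovers the paper's Lemma~\ref{lem:2}, so at this point the two arguments coincide. Where they diverge is in the second ingredient. The paper's Lemma~\ref{lem:1} comes from applying the same Young-type bound with exponent $p$ to a cleverly chosen $y$ involving the fractional power $a_n^{q/p}$; that fractional power forces a subsequent H\"older step with exponents $p/q$ and $p/(p-q)$ (hence the restriction $p>q$, with $p=q$ handled by a limit) and then an explicit minimisation of the auxiliary function $G(x)$. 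Your second step instead applies Young with exponent $q$ rather than $p$: after the substitution $u=\tfrac{p}{p+1}t$ your target $\left(\tfrac{p+1}{p}\right)^{q}t^{-q}+\tfrac{pq}{p+1}t\geq q+1$ is literally $u^{-q}+qu\geq q+1$. This produces a middle term that is linear in $a_n$ rather than proportional to $a_n^{q/p}$, which is precisely what lets the two pieces combine additively index-by-index and telescope, dispensing entirely with H\"older, with the induction in Lemma~\ref{lem:2}, and with the optimisation of $G$. As a bonus, nothing in your argument uses $p\geq q$: both weighted AM--GM steps are valid for arbitrary $p,q>0$, so you have in fact established \eqref{eq:1p8} under the weaker hypothesis $p,q>0$, which the paper's H\"older-based route cannot reach.
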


In Section \ref{sect:2} we describe the proof of Theorem \ref{thm:2} and we also prove the validity and the sharpness of \eqref{eq:1p7}. Moreover if one wants to study the whole topic concerning generalization of inequalities \eqref{eq:1p1} or \eqref{eq:1p2}, can see \cite{5} and \cite{10}.

In the last section we prove an application of Theorem \ref{thm:1}. More precisely we prove the following
\begin{theorem} \label{thm:3}
Let $\varphi: [0,1)\to\mathbb R^+$ be non-decreasing satisfying the following Muckenhoupt type inequality
\begin{equation} \label{eq:1p9}
\left(\frac 1 t \int_0^t\varphi(y)\,\mathrm dy\right)\left(\frac 1 t \int_0^t\varphi^{-1/(q-1)}(y)\,\mathrm dy\right)^{q-1}\leq M,
\end{equation}
for every $t\in(0,1]$, where $q>1$ is fixed and $M\geq 1$ is given. Let now $p_0\in(1,q)$ be defined as the solution of the following equality:
\begin{equation} \label{eq:1p10}
\frac{q-p_0}{q-1}(M\,p_0)^{1/(q-1)} = 1\,.
\end{equation}
Then for every $p\in(p_0,q]$ the following inequality
\begin{equation} \label{eq:1p11}
\frac 1 t \int_0^t\left(\frac 1 s \int_0^s \varphi\right) ^{-1/(p-\mathrlap{1)}}\mathrm ds \leq \left(\frac 1 t \int_0^t\varphi\right)^{-1/(p-1\mathrlap{)}}\frac{1}{K'}\,c\,\frac q p \left(\frac{p-1}{q-1}\right)^2
\end{equation}
is true, for every $t\in (0,1]$, where $c=M^{1/(q-1)}$ and $K' =K'(p,q,c) = \dfrac{1}{p^{1/(q-1)}}-c\,\dfrac{q-p}{q-1}$.
It is also sharp for $t=1$.
\end{theorem}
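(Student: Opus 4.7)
The plan is to apply Theorem~\ref{thm:1} to $\varphi$ on $[0,t]$ with a carefully chosen pair of exponents and then absorb the resulting weighted integral using the Muckenhoupt hypothesis~\eqref{eq:1p9}. Throughout, write $F(s) = \frac{1}{s}\int_0^s\varphi$, $I = \int_0^t F(s)^{-1/(p-1)}\,\mathrm ds$, and $B = t\,F(t)^{-1/(p-1)}$; since $\varphi$ is non-decreasing, both $F$ and $F'(s) = (\varphi(s)-F(s))/s$ are non-negative on $(0,t]$.

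First I would apply Theorem~\ref{thm:1} on $[0,t]$ with $f=\varphi$, letting the roles of $p$ and $q$ in Theorem~\ref{thm:1} be played by $\widetilde p = 1/(p-1)$ and $\widetilde q = 1/(q-1)$; the required hypothesis $\widetilde p \geq \widetilde q$ is exactly $p \leq q$. Using $\ell = F(t)$ and $(\widetilde p+1)/\widetilde p = p$, this immediately produces
\begin{equation*}
I \;\leq\; p^{1/(q-1)}\, J \;-\; \frac{p-1}{p(q-1)}\,B, \qquad J := \int_0^t F(s)^{\alpha}\,\varphi(s)^{-1/(q-1)}\,\mathrm ds,
\end{equation*}
where $\alpha = (p-q)/((p-1)(q-1)) \leq 0$.

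Next I would estimate $J$ using the Muckenhoupt hypothesis. Rearranging~\eqref{eq:1p9} gives the pointwise bound $u(s) \leq c\,s\,F(s)^{-1/(q-1)}$, where $u(s) = \int_0^s \varphi^{-1/(q-1)}$ and $c = M^{1/(q-1)}$. Integrating $J$ by parts against $\varphi^{-1/(q-1)} = u'$ writes $J$ as the boundary value $F(t)^{\alpha} u(t)$ plus a non-negative multiple $(-\alpha)\int_0^t F(s)^{\alpha-1} F'(s)\,u(s)\,\mathrm ds$ (the boundary contribution at $s=0$ vanishes). The boundary term at $s=t$ collapses to $cB$ after using $u(t) \leq c\,t\,F(t)^{-1/(q-1)}$ together with the convenient identity $\alpha - 1/(q-1) = -1/(p-1)$. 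In the remaining integral I would substitute the same pointwise bound on $u$, use $sF'(s) = \varphi(s)-F(s)$, and then invoke the exact identity
\[
\int_0^t F(s)^{-p/(p-1)}\,\varphi(s)\,\mathrm ds \;=\; p\,I - (p-1)\,B,
\]
itself obtained by integrating $\varphi = (sF(s))'$ by parts against $F^{-p/(p-1)}$. The upshot is an upper estimate of the shape $J \leq c_1\,B + c_2\,I$ with $c_1, c_2 > 0$ depending explicitly on $p,q,c$; in particular $c_2 = c(q-p)/(q-1)$.

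Feeding this bound on $J$ into the first display and collecting like terms yields $\bigl(1 - p^{1/(q-1)} c_2\bigr)\,I \leq \bigl(p^{1/(q-1)} c_1 - \tfrac{p-1}{p(q-1)}\bigr)\,B$, and by construction $1 - p^{1/(q-1)} c_2 = p^{1/(q-1)} K'$. The hypothesis $p>p_0$ is precisely what makes $K' > 0$ (at $p=p_0$ the defining equation~\eqref{eq:1p10} gives $K'=0$), so one may divide through to isolate $I/t \leq (\text{constant})\cdot F(t)^{-1/(p-1)}$. For the sharpness at $t=1$, I would test the inequality on the power weight $\varphi(s) = s^{\beta}$, choosing $\beta$ so that~\eqref{eq:1p9} is saturated at $t=1$, and verify that all three estimates employed---Theorem~\ref{thm:1}, the Muckenhoupt bound on $u$, and the integration by parts---reduce to equalities for this profile. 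The main obstacle I anticipate is the algebraic bookkeeping that forces the final constant into the compact form $\frac{c}{K'}\cdot\frac{q}{p}\cdot\bigl(\frac{p-1}{q-1}\bigr)^2$ and matches the equality case produced by $\varphi(s)=s^{\beta}$.
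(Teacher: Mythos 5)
Your plan is sound, and your $J$-estimate is, in effect, the same content as the paper's Lemmas~\ref{lem:A} plus the monotonicity of the auxiliary function $g_y$: both amount to integrating $J=\int_0^t F^{\alpha}\,u'$ by parts and replacing $u(s)=\int_0^s\varphi^{-1/(q-1)}$ by its Muckenhoupt majorant $c\,s\,F(s)^{-1/(q-1)}$. After the identity $\int_0^t F^{-p/(p-1)}\varphi = pI-(p-1)B$ one does reach exactly the paper's bound
\[
J \;\le\; c\,\frac{q-p}{q-1}\,I \;+\; c\,\frac{p-1}{q-1}\,B.
\]
The one place where your route genuinely diverges from the paper's is the \emph{direction of substitution}, and this changes the final constant. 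You plug the $J$-bound into the Theorem~\ref{thm:1} inequality $I \le p^{1/(q-1)} J - \tfrac{p-1}{p(q-1)}B$ and solve for $I$ directly. The paper instead plugs the Theorem~\ref{thm:1} inequality into the $J$-bound, solves for $J$ (its $A_1$), and only then passes back to $I$ via the \emph{unrefined} Theorem~\ref{thm:E}, which drops the $B$-term. Consequently the paper's constant is
\[
C_{\mathrm{paper}} = \frac{1}{K'}\,c\,\frac q p\left(\frac{p-1}{q-1}\right)^{2},
\]
while your route yields
\[
C_{\mathrm{yours}} = \frac{1}{K'}\cdot\frac{p-1}{q-1}\left(c-\frac{1}{p^{\,1+1/(q-1)}}\right) \;=\; C_{\mathrm{paper}} - \frac{p-1}{p(q-1)}\,.
\]
So the ``algebraic bookkeeping'' you worry about will not, in fact, produce the compact form in \eqref{eq:1p11}; it will produce a \emph{strictly smaller} constant. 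That is not an error in your argument---your inequality is strictly stronger for every $p\in(p_0,q)$---but it does mean your anticipated obstacle is not a bookkeeping puzzle: the two constants genuinely differ, by a $c$-independent gap $\tfrac{p-1}{p(q-1)}$. They coincide precisely at the critical value $c^{*}=\tfrac{q-1}{(q-p)p^{1/(q-1)}}$ (equivalently $K'=0$, $p=p_0$), which is why the paper's verification with $\varphi_a(s)=s^{a}$, $a\to(p-1)^{-}$, confirms asymptotic sharpness for either constant. Your sharpness check is therefore fine in spirit, but you should state that the extremizing sequence only saturates the bound in the limit $a\to(p-1)^{-}$ (equivalently $M\to M_{\mathrm{crit}}$), not for a fixed $\beta$; and you should report your own, smaller constant rather than try to recover the paper's.

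Two minor technical points worth recording when you write this up: the boundary term at $s=0$ vanishes because $\varphi\ge\varepsilon>0$ forces $F(s)\ge\varepsilon$ while $u(s)\to0$, and the pointwise bound $u(s)\le c\,s\,F(s)^{-1/(q-1)}$ may be substituted in the integral term only because its coefficient $(-\alpha)F^{\alpha-1}F'\ge0$, which in turn uses that $\varphi$ is non-decreasing. Both are implicit in your sketch and should be made explicit.
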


The above theorem implies immediately the following
\begin{corollary*} Let $\varphi$ be as in Theorem \ref{thm:3}. Then the following inequality is true for every $t\in (0,1]$ and every $p\in(p_0,q]$.
\[
\left(\frac 1 t \int_0^t\varphi^{-1/(p-1)}\right)^{p-1}\left(\frac 1 t \int_0^t\varphi\right) \leq \left[\frac{1}{K'}\,c\,\frac q p \left(\frac{p-1}{q-1}\right)^2\right]^{p-1}
\]
This gives us the best possible range of $p$'s for which the Muckenhoupt condition \eqref{eq:1p9} still holds, under the hypothesis of \eqref{eq:1p9}.
\end{corollary*}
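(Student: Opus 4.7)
The plan is to deduce the Corollary directly from Theorem \ref{thm:3} by exploiting the monotonicity of $\varphi$. Since $\varphi$ is non-decreasing on $[0,1)$, for every $s\in(0,t]$ one has $\varphi(y)\leq\varphi(s)$ for all $y\in[0,s]$, so
\[
\frac 1 s\int_0^s\varphi(y)\,\mathrm dy\;\leq\;\varphi(s).
\]
Raising both sides to the strictly positive power $1/(p-1)$ and inverting gives the pointwise comparison
\[
\varphi(s)^{-1/(p-1)}\;\leq\;\left(\frac 1 s\int_0^s\varphi(y)\,\mathrm dy\right)^{-1/(p-1)},\qquad s\in(0,t].
\]

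Next I would integrate this inequality over $[0,t]$, divide by $t$, and feed the right-hand side into Theorem \ref{thm:3} (which applies precisely because $p\in(p_0,q]$ forces $K'>0$). This produces
\[
\frac 1 t\int_0^t\varphi^{-1/(p-1)}(s)\,\mathrm ds\;\leq\;\left(\frac 1 t\int_0^t\varphi\right)^{-1/(p-1)}\cdot\frac{1}{K'}\,c\,\frac q p\left(\frac{p-1}{q-1}\right)^2.
\]
Finally, raising both sides to the power $p-1>0$ and multiplying through by $\frac 1 t\int_0^t\varphi$ yields exactly the stated inequality.

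The assertion that $(p_0,q]$ is the best possible range under hypothesis \eqref{eq:1p9} is a direct consequence of the sharpness statement in Theorem \ref{thm:3}: the constant $\frac{1}{K'}\,c\,\frac q p\bigl(\tfrac{p-1}{q-1}\bigr)^2$ blows up exactly as $p\downarrow p_0$, since by the defining equation \eqref{eq:1p10} the quantity $K'=p^{-1/(q-1)}-c\,\frac{q-p}{q-1}$ vanishes at $p=p_0$; I would verify this and note that the extremal weights witnessing the sharpness of Theorem \ref{thm:3} for $t=1$ also witness the sharpness of the Muckenhoupt bound here, so no value of $p\leq p_0$ can be admitted.

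There is essentially no obstacle: the monotonicity of $\varphi$ is used in the simplest possible way, and all of the analytic content has been absorbed into Theorem \ref{thm:3}. The only point requiring a little care is keeping track of the signs of the exponents (one inverts, then raises to $p-1$), so that each monotone operation preserves the direction of the inequality.
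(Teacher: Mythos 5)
Your derivation is correct and is exactly the ``immediate'' step the paper has in mind: the non-decreasing hypothesis on $\varphi$ gives $\frac 1 s\int_0^s\varphi\leq\varphi(s)$, hence $\varphi(s)^{-1/(p-1)}\leq\left(\frac 1 s\int_0^s\varphi\right)^{-1/(p-1)}$, and averaging this over $(0,t]$, chaining with Theorem \ref{thm:3}, raising to the power $p-1>0$, and multiplying through by $\frac 1 t\int_0^t\varphi$ gives the stated inequality. Your remark that $K'$ vanishes at $p=p_0$ by the defining relation \eqref{eq:1p10}, together with the sharpness of Theorem \ref{thm:3} at $t=1$, is the correct justification for the claim that $(p_0,q]$ is the optimal range.
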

The above corollary is the content of \cite{9} but with another constant. Thus by proving Theorem 3 we refine the results in \cite{9} by improving
the constants that appear there and by giving certain sharp inequalities that involve Muckenhoupt weights on $\mathbb R$.

\section{The Hardy inequality} \label{sect:2}

\begin{proof}[Proof of Theorem \ref{thm:2}]~

Let $p\geq q>0$ and $a_n,\lambda_n>0$, for every $n\in\mathbb N^*$. We define $\Lambda_n=\lambda_1+\lambda_2+\ldots+\lambda_n$, $A_n=\lambda_1a_1+\lambda_2a_2+\ldots+\lambda_na_n$, for $n=1, 2, \ldots$. We shall prove inequality \eqref{eq:1p8}. In order to do this we will give two Lemmas that are stated below. We follow \cite{9}.

\begin{lemma} \label{lem:1}
Under the above notation the following inequality holds for every $n\in\mathbb N^*$.
\begin{equation} \label{eq:2p1}
\left(\frac{p+1}{p}\right)^qa_n^{-q}\left(\frac{A_n}{\Lambda_n}\right)^{-p\mathrlap{+q}}+p\left(\frac{p}{p+1}\right)^{q/p}a_n^{q/p}\left(\frac{A_n}{\Lambda_n}\right)^{-p-q/p} \geq (p+1)\left(\frac{A_n}{\Lambda_n}\right)^{-p}.
\end{equation}
\end{lemma}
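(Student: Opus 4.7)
The inequality is pointwise in $n$, so I would fix $n$ and abbreviate $a=a_n$ and $\mu=A_n/\Lambda_n$. After dividing through by $\mu^{-p}>0$, the claim reduces to
\[
\left(\frac{p+1}{p}\right)^{q}\left(\frac{\mu}{a}\right)^{q}+p\left(\frac{p}{p+1}\right)^{q/p}\left(\frac{\mu}{a}\right)^{-q/p}\;\geq\;p+1,
\]
which, setting $r=(\mu/a)^{q}>0$, takes the one-variable form $Ar+Br^{-1/p}\geq p+1$ with $A=\bigl(\tfrac{p+1}{p}\bigr)^{q}$ and $B=p\bigl(\tfrac{p}{p+1}\bigr)^{q/p}$. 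No information about the relationship between $a_n$ and the weighted average $A_n/\Lambda_n$ is used, so Lemma \ref{lem:1} is really an elementary inequality in one positive variable.

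The natural plan is to apply weighted AM--GM (equivalently Young's inequality) with weights $\tfrac{1}{p+1}$ and $\tfrac{p}{p+1}$. These weights are chosen precisely so that the exponents of $r$ appearing in the geometric-mean bound, namely $\tfrac{1}{p+1}\cdot 1$ and $\tfrac{p}{p+1}\cdot\bigl(-\tfrac{1}{p}\bigr)$, sum to zero. The resulting lower bound
\[
Ar+Br^{-1/p}\;\geq\;\bigl[(p+1)A\bigr]^{1/(p+1)}\Bigl[\tfrac{p+1}{p}B\Bigr]^{p/(p+1)}
\]
is then independent of $r$, and the task reduces to checking that this constant equals $p+1$.

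The main (but routine) obstacle is the bookkeeping: substituting the explicit $A$ and $B$, one must verify that the powers of $p$, $p+1$, and $q$ collapse to yield exactly $p+1$. The $q$-dependence cancels entirely, which is precisely what forces the peculiar-looking constants in the statement. The same computation also identifies the equality case $r=(p/(p+1))^{q}$, i.e.\ $a/\mu=(p+1)/p$, a fact that can be expected to enter the sharpness part of Theorem~\ref{thm:1} and the subsequent Lemma that will convert the second term of \eqref{eq:2p1} into the correction $\tfrac{q}{p+1}\Lambda_N(A_N/\Lambda_N)^{-p}$ appearing in \eqref{eq:1p8}.
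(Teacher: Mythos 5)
Your proof is correct, and the reduction is clean. Dividing by $\mu^{-p}$ and substituting $r=(\mu/a)^{q}$ does reduce (\ref{eq:2p1}) to $Ar+Br^{-1/p}\geq p+1$, the weights $\tfrac{1}{p+1},\tfrac{p}{p+1}$ make the exponent of $r$ vanish in the geometric-mean bound, and the constant collapses to $p+1$ as you claim (I checked: $(p+1)A=\tfrac{(p+1)^{q+1}}{p^{q}}$, $\tfrac{p+1}{p}B=(p+1)^{1-q/p}p^{q/p}$, and the product of the weighted powers is $(p+1)^{1}p^{0}$). Your equality case $a/\mu=(p+1)/p$ is also right.

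The paper takes an essentially equivalent but differently packaged route. It first isolates the one-variable inequality $y^{-p}+py\geq p+1$ (their (\ref{eq:2p3})), proves it by the same Young/weighted AM--GM mechanism you use, then homogenizes via $y=y_1/y_2$ to obtain the two-variable inequality $y_1^{-p}+p\,y_1 y_2^{-p-1}\geq (p+1)y_2^{-p}$ (their (\ref{eq:2p2})), and finally plugs in the rather opaque choices $y_1=\left(\tfrac{p}{p+1}\right)^{1+q/p}a_n^{q/p}\left(\tfrac{A_n}{\Lambda_n}\right)^{1-q/p}$ and $y_2=\tfrac{p}{p+1}\tfrac{A_n}{\Lambda_n}$. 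Your version is arguably more transparent: by normalizing out $\mu^{-p}$ first you make the one-variable nature of the lemma manifest, and the weight choice $(\tfrac{1}{p+1},\tfrac{p}{p+1})$ is forced rather than pulled from a hat. What the paper's packaging buys is that the intermediate inequality (\ref{eq:2p2}) is reused verbatim in the proof of Lemma \ref{lem:2} (at step (\ref{eq:2p8})), so establishing it as a standalone fact saves a little work later. If you kept your streamlined proof of Lemma \ref{lem:1} you would still want (\ref{eq:2p2}) on hand for Lemma \ref{lem:2}, but that is a one-line observation either way.
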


\begin{proof}
It is well known that the following inequality holds
\begin{equation} \label{eq:2p2}
y_1^{-p} + p\,y_1\,y_2^{-p-1}-(p+1)\,y_2^{-p} \geq 0,
\end{equation}
for every $y_1, y_2 > 0$.

This is in fact an immediate consequence of the inequality
\begin{equation} \label{eq:2p3}
y^{-p} + p\,y \geq (p+1),\ \text{for every}\ y, p\geq 0.
\end{equation}

Inequality \eqref{eq:2p3} is true in view of Young's inequality which asserts that for every $t,s$ nonnegative the following inequality is true
\begin{equation}
\frac{1}{q}t^q+\frac{1}{q^{'}}s^{q{'}}\geq ts
\end{equation}

whenever $q$ is greater than 1, and $q^{'}$ is such that $\frac{1}{q}+\frac{1}{q^{'}}=1$. Then by choosing $q=p+1$ in Young's inequality, and
setting $t=\frac{1}{y}$ we obtain \eqref{eq:2p3}.

If we apply \eqref{eq:2p3} when $y=y_1/y_2$ we obtain \eqref{eq:2p2}. Now we apply \eqref{eq:2p2} when $\displaystyle y_1 = \left(\frac{p}{p+1}\right)^{1+q\mathrlap{/p}} a_n^{q/p} \left(\frac{A_n}{\Lambda_n}\right)^{1-q\mathrlap{/p}}$ and $\displaystyle y_2 = \left(\frac{p}{p+1}\right)\frac{A_n}{\Lambda_n}$.

Then as it is easily seen \eqref{eq:2p1} is immediately proved. Our proof of Lemma \ref{lem:1} is now complete.
\end{proof}

As a consequence of Lemma \ref{lem:1} we have (by summing the respective inequalities) that:
\begin{multline} \label{eq:2p4}
\left(\frac{p+1}{p}\right)^q\sum_{n=1}^N\lambda_na_n^{-q}\left(\frac{A_n}{\Lambda_n}\right)^{-p\mathrlap{+q}} + p\left(\frac{p}{p+1}\right)^{q/p}\sum_{n=1}^N\lambda_na_n^{q/p}\left(\frac{A_n}{\Lambda_n}\right)^{-p-q/p} \\
\geq (p+1)\sum_{n=1}^N\left(\frac{A_n}{\Lambda_n}\right)^{-p}\lambda_n,
\end{multline}
for every $N\in\mathbb N^*$.

We proceed to the proof of
\begin{lemma} \label{lem:2}
Under the above notation the following inequality is true for every $N\in \mathbb N^*$
\begin{equation} \label{eq:2p5}
\sum_{n=1}^N\lambda_n\left(\frac{A_n}{\Lambda_n}\right)^{-p} - \left(\frac{p}{p+1}\right)\sum_{n=1}^N\lambda_na_n\left(\frac{A_n}{\Lambda_n}\right)^{-p-1} \geq \frac{\Lambda_n}{p+1}\left(\frac{A_n}{\Lambda_n}\right)^{-p}.
\end{equation}
\end{lemma}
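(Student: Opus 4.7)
The plan is to prove \eqref{eq:2p5} by induction on $N$, reducing the inductive step to the pointwise inequality \eqref{eq:2p2} already established in the proof of Lemma \ref{lem:1}. (The right-hand side of \eqref{eq:2p5} should be read as $\frac{\Lambda_N}{p+1}(A_N/\Lambda_N)^{-p}$; the subscript $n$ there appears to be a typo.)

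For the base case $N=1$, one has $A_1/\Lambda_1=a_1$, and both sides collapse to $\frac{\lambda_1}{p+1}a_1^{-p}$, giving equality. For the inductive step, assume the inequality holds for $N-1$. Subtracting the inductive hypothesis from the $N$-case, it suffices to verify that
\begin{equation*}
\frac{\Lambda_{N-1}}{p+1}\left(\frac{A_{N-1}}{\Lambda_{N-1}}\right)^{-p} + \lambda_N\left(\frac{A_N}{\Lambda_N}\right)^{-p} - \frac{p}{p+1}\lambda_N a_N\left(\frac{A_N}{\Lambda_N}\right)^{-p-1} \;\geq\; \frac{\Lambda_N}{p+1}\left(\frac{A_N}{\Lambda_N}\right)^{-p}.
\end{equation*}

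Setting $x=A_{N-1}/\Lambda_{N-1}$ and $y=A_N/\Lambda_N$ and using $\Lambda_N=\Lambda_{N-1}+\lambda_N$ together with the identity $\lambda_N a_N = A_N - A_{N-1} = \Lambda_N y - \Lambda_{N-1} x$, a short computation yields $\lambda_N(y - a_N) = \Lambda_{N-1}(x - y)$. Substituting this into the displayed inequality and collecting the coefficients of $x^{-p}$ and $y^{-p}$, the required bound reduces to
\begin{equation*}
\frac{\Lambda_{N-1}}{p+1}\bigl[x^{-p} + p\,x\,y^{-p-1} - (p+1)\,y^{-p}\bigr] \;\geq\; 0,
\end{equation*}
which is exactly $\Lambda_{N-1}/(p+1)$ times inequality \eqref{eq:2p2} applied with $y_1=x$ and $y_2=y$, and is therefore nonnegative.

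The main obstacle is purely algebraic: tracking how $\lambda_N a_N$ and the telescoping factor $\Lambda_{N-1}x^{-p} - \Lambda_N y^{-p}$ combine so that the $a_N$ dependence disappears and only the pointwise inequality in the variables $x,y$ remains. Once the identity $\lambda_N(y-a_N)=\Lambda_{N-1}(x-y)$ is noticed, no new analytic input beyond Young's inequality (used already for Lemma \ref{lem:1}) is needed.
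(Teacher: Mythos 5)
Your proof is correct and follows essentially the same route as the paper: induction on $N$ with equality at $N=1$, reducing the inductive step via $\lambda_N a_N = \Lambda_N y-\Lambda_{N-1}x$ to the pointwise inequality $x^{-p}+p\,x\,y^{-p-1}-(p+1)y^{-p}\geq 0$, i.e.\ \eqref{eq:2p2}. You also correctly identify the $\Lambda_n$ on the right-hand side of \eqref{eq:2p5} as a typo for $\Lambda_N$.
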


\begin{proof}
We follow \cite{9}.

For $N=1$, inequality \eqref{eq:2p5} is in fact equality. We suppose now that it is true with $N-1$ in place of $N$. We will prove that it is also true for the choice of $N$.

\begin{multline} \label{eq:2p6}
\text{Define}\ S_N = \sum_{n=1}^N \left[\lambda_n\left(\frac{A_n}{\Lambda_n}\right)^{-p}-\left(\frac{p}{p+1}\right)\lambda_n\,a_n\left(\frac{A_n}{\Lambda_n}\right)^{-p-1}\right] = \\
\hspace{1em}\begin{multlined}
= \sum_{n=1}^{N-1}\left[\lambda_n\left(\frac{A_n}{\Lambda_n}\right)^{-p}-\left(\frac{p}{p+1}\right)\lambda_n\,a_n\left(\frac{A_n}{\Lambda_n}\right)^{-p-1}\right] + \\
+ \lambda_N\left(\frac{A_N}{\Lambda_N}\right)^{-p} - \left(\frac{p}{p+1}\right)(A_N-A_{N-1})\left(\frac{A_N}{\Lambda_N}\right)^{-p-1}.
\end{multlined}
\end{multline}
By our induction step we obviously see that
\begin{equation} \label{eq:2p7}
\begin{aligned}
S_N & \geq \frac{\Lambda_{N-1}}{p+1}\left(\frac{A_{N-1}}{\Lambda_{N-1}}\right)^{-p} + \lambda_N\left(\frac{A_N}{\Lambda_N}\right)^{-p} - \left(\frac{p}{p+1}\right)(A_N{-}A_{N-1})\left(\frac{A_N}{\Lambda_N}\right)^{-\mathrlap{p-1}} = \\
 & \begin{multlined}[.9\textwidth]
 = \frac{\Lambda_{N-1}}{p+1}\left(\frac{A_{N-1}}{\Lambda_{N-1}}\right)^{-p} + \lambda_N\left(\frac{A_N}{\Lambda_N}\right)^{-p} - \frac{p}{p+1}\Lambda_N\left(\frac{A_N}{\Lambda_N}\right)^{-p} + \\
 + \frac{\Lambda_{N-1}}{p+1}\left[p\,\frac{A_{N-1}}{\Lambda_{N-1}}\left(\frac{A_N}{\Lambda_N}\right)^{-p-1}\right].
\end{multlined}
\end{aligned}
\end{equation}
We use now inequality \eqref{eq:2p2} in order to find a lower bound for the expression in brackets in \eqref{eq:2p7}. We thus have
\begin{equation} \label{eq:2p8}
p\left(\frac{A_{N-1}}{\Lambda_{N-1}}\right)\left(\frac{A_N}{\Lambda_N}\right)^{-p-1} \geq -\left(\frac{A_{N-1}}{\Lambda_{N-1}}\right)^{-p} + (p+1)\left(\frac{A_N}{\Lambda_N}\right)^{-p}.
\end{equation}

We use \eqref{eq:2p8} in \eqref{eq:2p7} and obtain that
\begin{align*}
S_N \geq & \begin{multlined}[.9\textwidth][t]\frac{\Lambda_{N-1}}{p+1}\left(\frac{A_{N-1}}{\Lambda_{N-1}}\right)^{-p} + \lambda_N\left(\frac{A_N}{\Lambda_N}\right)^{-p}-\left(\frac{p}{p+1}\right)\Lambda_N\left(\frac{A_N}{\Lambda_N}\right)^{-p} + \\
+ \frac{\Lambda_{N-1}}{p+1}\left[(p+1)\left(\frac{A_N}{\Lambda_N}\right)^{-p} - \left(\frac{A_{N-1}}{\Lambda_{N-1}}\right)^{-p}\right] = \end{multlined} \\
 = & \left(\frac{A_N}{\Lambda_N}\right)^{-p}\left(\lambda_N-\frac{p}{p+1}\Lambda_N+\Lambda_{N-1}\right) = \frac{\Lambda_N}{p+1}\left(\frac{A_N}{\Lambda_N}\right)^{-p}
\end{align*}
that is \eqref{eq:2p5} holds. In this way we derived inductively the proof of our Lemma.
\end{proof}
We consider now the quantity
\begin{equation} \label{eq:2p9}
y = \sum_{n=1}^N \lambda_na_n^{q/p}\left(\frac{A_n}{\Lambda_n}\right)^{-p-q/p}.
\end{equation}
Then $\displaystyle y = \sum_{n=1}^N\lambda_n \left[a_n^{q/p}\left(\frac{A_n}{\Lambda_n}\right)^{-q-q/p}\right]\left[\frac{A_n}{\Lambda_n}\right]^{-p+q}$.
Suppose that $p>q$. The case $p=q$ will be discussed in the end of the proof.
Applying H\"older's inequality now in the above sum with exponents $r=\frac p q$ and $r'=\frac{p}{p-q}$, we have as a consequence that
\begin{align}
y \leq & \left\{\sum_{n=1}^N\lambda_na_n\left(\frac{A_n}{\Lambda_n}\right)^{-p-1}\right\}^{\frac q p} \left\{\sum_{n=1}^N\lambda_n\left(\frac{A_n}{\Lambda_n}\right)^{-p}\right\}^{1-\frac q p} \nonumber \\
 \leq & \left\{\frac{p+1}{p}\sum_{n=1}^N\lambda_n\left(\frac{A_n}{\Lambda_n}\right)^{-p} - \frac 1 p \Lambda_N\left(\frac{A_N}{\Lambda_N}\right)^{-p}\right\}^{\mathrlap{\frac q p}}\cdot \left\{\sum_{n=1}^N\lambda_n\left(\frac{A_n}{\Lambda_n}\right)^{-p}\right\}^{1-\frac q p}, \label{eq:2p10}
\end{align}
in view of Lemma \ref{lem:2}.

We set now $z=\sum_{n=1}^N \lambda_na_n^{-q}\left(\frac{A_n}{\Lambda_n}\right)^{-p+q}$ and $x = \sum_{n=1}^N\lambda_n\left(\frac{A_n}{\Lambda_n}\right)^{-p}$. Because of \eqref{eq:2p10} we have that
\begin{equation} \label{eq:2p11}
y\leq \left\{\frac{p+1}{p} x-\frac 1 p \Lambda_N\left(\frac{A_N}{\Lambda_N}\right)^{-p}\right\}^{\mathrlap{\frac q p }}\cdot x^{1-\frac q p}.
\end{equation}
By setting now $c=\Lambda_N\left(\frac{A_N}{\Lambda_N}\right)^{-p}$, we have because of \eqref{eq:2p11},
\begin{equation} \label{eq:2p12}
y\leq \left(\frac{p+1}{p}x - \frac c p\right)^{\mathrlap{\frac q p}}\cdot x^{1-\frac q p} = \left(\frac{p+1}{p}\right)^{\frac q p}\left[x-\frac{c}{p+1}\right]^{\frac q p} x^{1-\frac q p}.
\end{equation}
Note that by \eqref{eq:2p12} the quantity $x-\frac{c}{p+1}$ is positive, that is $x>\frac{c}{p+1}$. Now because of Lemma 1, it is immediate that
\begin{gather}
\left(\frac{p+1}{p}\right)^qz + p\left(\frac{p}{p+1}\right)^{\frac q p}y \geq (p+1)x \overset{\eqref{eq:2p12}}{\implies} \nonumber \\
\left(\frac{p+1}{p}\right)^qz + p\left(\frac{p}{p+1}\right)^{\frac q p} \left(\frac{p+1}{p}\right)^{\frac q p} \left[x-\frac{c}{p+1}\right]^{\frac q p} x^{1-\frac q p} \geq (p+1)x \implies \nonumber \\
\left(\frac{p+1}{p}\right)^qz \geq (p+1)x - p\left[x-\frac{c}{p+1}\right]^{\frac q p} x^{1-\frac q p} = \nonumber \\
= x + \left\{p\,x-p\left[x-\frac{c}{p+1}\right]^{\frac q p}x^{1-\frac q p}\right\} = x + p\,G(x), \label{eq:2p13}
\end{gather}
where $G(x)$ is defined for $x>\frac{c}{p+1}$, by $G(x) = x-\left[x-\frac{c}{p+1}\right]^{\frac qp}x^{1-\frac q p}$.

By \eqref{eq:2p13} now we obtain
\begin{equation} \label{eq:2p14}
\left(\frac{p+1}{p}\right)^qz-x \geq p\,G(x) \geq p \inf\left\{G(x): x>\frac{c}{p+1}\right\},
\end{equation}

We will now find the infimum in the above relation. Note that
\begin{multline} \label{eq:2p15}
G'(x) = 1-\left(1-\frac q p\right)x^{-\frac q p}\left(x-\frac{c}{p+1}\right)^{\frac q p} - x^{1-\frac q p} \left(\frac q p\right)\left(x-\frac{c}{p+1}\right)^{\frac q p -1} = \\
= 1 - \left(1-\frac q p\right)\left(1-\frac{c}{(p+1)\,x}\right)^{\frac q p} - \frac q p \left(1-\frac{c}{(p+1)\,x}\right)^{\frac q p -1}.
\end{multline}
We consider now the function
\[
H(t) = 1-\left(1-\frac q p\right)t^{\frac q p} - \frac q p\, t^{\frac q p -1},\quad t\in(0,1).
\]
Then $H'(t) = -t^{q/p-2}\left(1-\frac q p\right)\frac q p\, (t-1) > 0$, for every $t\in (0,1)$. Thus $H(t)$ is strictly increasing $\implies H(t) \leq H(1) = 0,\ \forall t\in(0,1)$. By setting now $t=1-\frac{c}{(p+1)x}$, we conclude that the expression in the right of \eqref{eq:2p15} is negative, that is $G'(x)\leq 0,\ \forall x>\frac{c}{p+1}\implies G$ is decreasing in $\left(\frac{c}{p+1},+\infty\right)$.
Thus $G(x) \geq \lim_{x\to+\infty}G(x) = \ell$.

Then $\displaystyle \ell = \lim_{x\to+\infty}\left[x-x^{1-\frac q p}\left(x-\frac{c}{p+1}\right)^{\frac q p}\right] = \lim_{x\to+\infty}\frac{1-\left(1-\frac{c}{(p+1)x}\right)^{\frac q p}}{\frac{1}{x}} = $\\ $= \lim_{y\to 0^+}\frac{1-\left(1-\frac{c}{p+1}y\right)^{\frac q p}}{y} = -\frac q p \left(-\frac{c}{p+1}\right) = \frac{q\,c}{p\,(p+1)}$, by applying the De L'Hospital rule. Thus we have by \eqref{eq:2p14} that $\displaystyle \left(\frac{p+1}{p}\right)^qz-x \geq p\frac{q\,c}{p\,(p+1)} = \frac{q\,c}{p+1}$, which gives inequality \eqref{eq:1p8}, by the definitions of $x$, $z$ and $c$.

The proof of Theorem \ref{thm:2} in the case $p>q$ is complete. The case $p=q$ is also true by continuity reasons, that is by letting $p\rightarrow q^+$
in \eqref{eq:1p8}.
\end{proof}

\begin{proof}[Proof of Theorem \ref{thm:1}] ~
We first prove the validity of \eqref{eq:1p7}. We simplify the proof by considering the case where $a=0$ and $b=1$.
We consider also the case where $f: [0,1]\to\mathbb R^+$ is continuous. The general case for Riemann integrable functions can be handled by using
approximation arguments which involve sequences of continuous functions. We suppose that $\int_0^1f=\ell$. We define $F: (0,1]\to\mathbb R^+$ by
$F(t)=\frac{1}{t}\int_0^tf(u)\,\mathrm du$. Then
$$\int_0^1\left(\frac{1}{t}\int_0^tf(u)\,\mathrm du\right)^{\mathrlap{-p}}\mathrm dt=\int_0^1(F(t))^{-p}\mathrm dt.$$

The integral above can be approximated by Riemann sums of the following type:
$$\sum_{n=1}^{2^k} \frac{1}{2^k}(F(\frac{n}{2^k}))^{-p}=\frac{1}{2^k}\sum_{n=1}^{2^k}(\frac{\sum_{i=1}^{n}a_i^{(k)}}{n})^{-p}.$$
where the quantities $a_i^{(k)}$ are defined as follows:

$$a_i^{(k)}=2^k\int_{\frac{i-1}{2^k}}^{\frac{i}{2^k}}f$$
for $i=1,..., 2^k$.
We use now inequality \eqref{eq:1p8}. Thus the sum that appears above is less or equal than

$$(\frac{p+1}{p})^q\frac{1}{2^k}\sum_{n=1}^{2^k}(\frac{\sum_{i=1}^{n}a_i^{(k)}}{n})^{-p+q}(a_n^{(k)})^{-q}
-\frac{q}{p+1}(\frac{\sum_{n=1}^{2^k}a_n^{(k)}}{2^k})^{-p}.$$

Now we obviously have that $\frac{\sum_{n=1}^{2^k}a_n^{(k)}}{2^k}=\ell$, while since $f$ is continuous, for every $n=1,..., 2^k$  there exists
$b_n^{(k)} \in [\frac{n-1}{2^k},\frac{n}{2^k}]$, such that $a_n^{(k)}=f(b_n^{(k)})$. Thus the quantity that appears above equals

$$(\frac{p+1}{p})^q\frac{1}{2^k}\sum_{n=1}^{2^k}(F(\frac{n}{2^k}))^{-p+q}(f(b_n^{(k)}))^{-q}-\frac{q}{p+1}\ell^{-p}.$$

Now, by continuity reasons, and by the choice of $b_n^{(k)}$, the quantity above approximates

$$(\frac{p+1}{p})^q\frac{1}{2^k}\sum_{n=1}^{2^k}(F(b_n^{(k)}))^{-p+q}(f(b_n^{(k)}))^{-q}-\frac{q}{p+1}\ell^{-p}$$
as $k\to\infty$. It is clear now that this last quantity approximates the right side of \eqref{eq:1p7}, as $k\to\infty$.

We now prove the sharpness of \eqref{eq:1p7}. Let $\ell>0$ be fixed and $p\geq q>0$. We consider for any $a\in\left(-\frac 1 p,0\right)$ the following function $g_a(t) = \ell\,(1-a)\,t^{-a}$, $t\in [0,1]$. It is easy to see that $\int_0^1g_a=\ell$, $\frac 1 t \int_0^tg_a=\frac{1}{1-a}g_a(t)$ for every $t\in (0,1]$ and that $\int_0^1g_a^{-p} = \frac{\ell^{-p}(1-a)^{-p}}{1+ap}$. We consider now the difference
\[
L_a = \int_0^1\left(\frac 1 t \int_0^tg_a\right)^{-\mathrlap{p}}\mathrm dt - \left(\frac{p+1}{p}\right)^q\int_0^1\left(\frac 1 t \int_0^tg_a\right)^{-p\mathrlap{+q}}g_a^{-q}(t)\,\mathrm dt.
\]
It equals to (because of the above properties that $g_a$ satisfy)
\[
L_a = \ell^{-p}\frac{\left[1-(1-a)^{-q}\left(\frac{p+1}{p}\right)^q\right]}{1+a\,p}.
\]

We let $a\to-\frac{1}{p}^+$ and we conclude that
\[
\lim_{a\to-\frac{1}{p}^+}L_a = \left.\ell^{-p}\,q\,(1-a)^{-q-1}\right]_{a=-\frac 1 p}(-1)\left(\frac{p+1}{p}\right)^q = -\frac{q}{p+1}\ell^{-p}.
\]

In this way we derived the sharpness or \eqref{eq:1p7}.

The proof of Theorem \ref{thm:1} is complete.
\end{proof}

\section{Proof of Theorem \ref{thm:3}} \label{sect:3}
Let $\varphi: [0,1)\to\mathbb R^+$ be non decreasing satisfying the inequality
\begin{equation} \label{eq:3p1}
\left(\frac 1 t\int_0^t \varphi\right)\left(\frac 1 t \int_0^t\varphi^{-1/(q-1)}\right)^{q-1} \leq M,
\end{equation}
for every $t\in(0,1]$, where $q$ is fixed such that $q>1$ and $M>0$. We assume also that there exists an $\varepsilon>0$ such that $\varphi(t)\geq \varepsilon>0$, $\forall t\in[0,1)$. The general case can be handled using this one, by adding a small constant $\varepsilon>0$ to $\varphi$.

We need the following from \cite{9}.
\begin{lemmaA} \label{lem:A}
Let $\psi:(0,1)\to[0,+\infty)$, such that $\lim_{t\to0}t\,[\psi(t)]^a=0$, where $a\in\mathbb R$, $a>1$ and $\psi(t)$ is continuous and monotone on $(0,1)$. Then the following is true for any $a\in(0,1)$.
\begin{equation} \label{eq:3p2}
a\int_0^u\psi^{a-1}(t)\,[t\,\psi(t)]'\mathrm dt = u\,\psi^a(u)+(a-1)\int_0^u\psi^a(t)\,\mathrm dt.
\end{equation}
\end{lemmaA}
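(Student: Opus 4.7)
The identity to be shown has the shape of a classical integration-by-parts formula, and the plan is to recognize it as such. First I would expand the inner derivative as $[t\,\psi(t)]' = \psi(t) + t\,\psi'(t)$, which, after using the chain rule relation $a\,\psi^{a-1}\psi' = [\psi^a]'$, splits the left-hand side as
\[
a\int_0^u \psi^{a-1}(t)\,[t\,\psi(t)]'\,\mathrm dt \;=\; a\int_0^u \psi^a(t)\,\mathrm dt \;+\; \int_0^u t\,[\psi^a(t)]'\,\mathrm dt.
\]
The second integral is now in a form directly amenable to integration by parts against the factor $t$.

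Next I would apply integration by parts to the second integral on $(0,u]$ to obtain
\[
\int_0^u t\,[\psi^a(t)]'\,\mathrm dt \;=\; u\,\psi^a(u) \;-\; \lim_{t\to 0^+} t\,\psi^a(t) \;-\; \int_0^u \psi^a(t)\,\mathrm dt,
\]
where the boundary term at $0$ vanishes precisely because of the hypothesis $\lim_{t\to 0} t\,[\psi(t)]^a = 0$. Substituting this back into the previous display and combining the two integrals of $\psi^a$ produces exactly $u\,\psi^a(u) + (a-1)\int_0^u \psi^a(t)\,\mathrm dt$, which is the claimed identity.

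The main obstacle is the regularity of $\psi$: since $\psi$ is only assumed continuous and monotone on $(0,1)$, its pointwise derivative exists merely almost everywhere, so both the chain rule step and the integration by parts step need careful justification. I would handle this by working first on a compact interval $[\delta,u]$ with $\delta>0$: a continuous monotone $\psi$ is of bounded variation on $[\delta,u]$, so $\psi^a$ is also of bounded variation there, and Riemann--Stieltjes integration by parts yields
\[
\int_\delta^u t\,\mathrm d(\psi^a(t)) \;=\; u\,\psi^a(u) - \delta\,\psi^a(\delta) - \int_\delta^u \psi^a(t)\,\mathrm dt.
\]
An equivalent, perhaps cleaner, route is to approximate $\psi$ uniformly on $[\delta,u]$ by $C^1$ monotone functions. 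Finally, letting $\delta\to 0^+$ and invoking the boundary hypothesis to kill $\delta\,\psi^a(\delta)$ completes the argument; the convergence of $\int_0^u \psi^a$ and of the left-hand integral follows from continuity together with the same boundary control.
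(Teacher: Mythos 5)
Your derivation is correct, and it is the natural (indeed, essentially the only) way to prove the identity: expand $[t\,\psi(t)]' = \psi(t)+t\,\psi'(t)$, use $a\,\psi^{a-1}\psi'=(\psi^a)'$, integrate $\int_0^u t\,(\psi^a)'\,\mathrm dt$ by parts, and observe that the boundary term $\lim_{t\to0^+}t\,\psi^a(t)$ vanishes by hypothesis, leaving exactly $u\,\psi^a(u)+(a-1)\int_0^u\psi^a$. Note, however, that the paper does not contain its own proof of this lemma — it is quoted from \cite{9} with ``We refer to \cite{9} for the proof'' — so there is no in-paper argument to compare against; your proof fills that reference in with the expected computation.

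Two small remarks. First, the statement as printed contains a slip: it imposes $a>1$ and then says the identity holds ``for any $a\in(0,1)$''; the latter should clearly read $u\in(0,1)$, and your proof implicitly treats it that way. Second, your paragraph on regularity is the right instinct, but be aware that passing from the Riemann--Stieltjes version $\int_\delta^u t\,\mathrm d(\psi^a)$ back to the Lebesgue integral $\int_\delta^u t\,(\psi^a)'\,\mathrm dt$ (and likewise interpreting $[t\,\psi(t)]'$ as a genuine pointwise derivative under the integral sign) needs $\psi$ to be absolutely continuous, not merely continuous and monotone — a singular part would make the two differ. In the paper's actual application $\psi(t)=\frac1t\int_0^t\varphi^{-1/(q-1)}$, which is locally Lipschitz on $(0,1)$, so absolute continuity holds and your argument goes through without issue; you might just flag explicitly that the lemma is being used under that extra regularity.
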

We refer to \cite{9} for the proof.

We continue the proof of Theorem \ref{thm:3}. We set $h:[0,1) \to \mathbb R^+$ by $h(t) = \varphi^{-1/(q-1)}(t)$. Then obviously $h$ satisfies $h(t)\leq \varepsilon^{-1/(q-1)}$, $\forall t\in[0,1)$. Let also $p_0\in[1,q]$ be defined such that
\[
\frac{q-p_0}{q-1}\left(M\,p_0\right)^{1/(q-1)} = 1.
\]
Let also $p\in(p_0,q]$. Define $\psi$ by $\psi(t) = \frac 1 t \int_0^t\varphi^{-1/(q-1)}$. Then by Lemma \ref{lem:A}, we get for $a=\frac{q-1}{p-1}>1$, the following:
\begin{multline} \label{eq:3p3}
\frac{q-1}{p-1}\int_0^t\varphi^{-1/(q-1)}(s)\left(\frac 1 s \int_0^s \varphi^{-1/(q-1)}\right)^{\frac{q-p}{p-1}}\mathrm ds - \\
 -(\frac{q-p}{p-1})\int_0^t\left(\frac 1 s \int_0^s\varphi^{-1/(q-1)}\right)^{\frac{q-1}{p-1}}\mathrm ds = t\left(\frac 1 t \int_0^t\varphi^{-1/(q-1)}\right)^{\frac{q-1}{p-1}}.
\end{multline}

Define for every $y>0$ the following function of the variable of $x\in[y,+\infty)$
\begin{equation} \label{eq:3p4}
g_y(x) = \frac{q-1}{q-p} y\,x^{(q-p)/(p-1)}-x^{(q-1)/(p-1)}.
\end{equation}

Then $g'_y(x) = \frac{q-1}{p-1} x^{\left[(q-1)/(p-1)\right]-2}(y-x) \leq 0$, $\forall x\geq y$. Then $g_y$ is strictly decreasing on $[y,+\infty)$.

So if $y\leq x\leq w \implies g_y(x) \geq g_y(w)$. For every $s\in(0,t]$ set now
\[
x=\frac 1 s \int_0^s\varphi^{-1/(q-1)},\ y=\varphi^{-1/(q-1)}(s),\ c=M^{1/(q-1)},\ \text{and}\ z = \left(\frac 1 s \int_0^s \varphi\right)^{-\frac 1 {q-1}}.
\]
Note that by \eqref{eq:3p1} the following is true $y\leq x\leq c\,z =: w$. Thus
\begin{multline} \label{eq:3p5}
g_y(x) \geq g_y(w) \implies \\
\frac{q-1}{q-p} \varphi^{-1/(q-1)}(s) \left(\frac 1 s \int_0^s\varphi^{-1/(q-1)}\right)^{\frac{(q-p)}{(p-1)}} - \left(\frac 1 s \int_0^s\varphi^{-1/(q-1)}\right)^{\frac{(q-1)}{(p-1)}} \geq \\
\geq \frac{q-1}{q-p} \varphi^{-1/(q-1)}(s)\left(\frac 1 s \int_0^s \varphi\right)^{\frac{1}{q-1}-\frac{1}{p-1}}c^{\frac{q-p}{p-1}} - c^{\frac{q-1}{p-1}}\left(\frac 1 s \int_0^s\varphi\right)^{-\frac{1}{(p-1)}}
\end{multline}

Integrating \eqref{eq:3p5} on $s\in(0,t]$ we get
\begin{multline} \label{eq:3p6}
\frac{q-1}{q-p} \int_0^t\varphi^{-1/(q-1)}(s) \left(\frac 1 s \int_0^s \varphi\right)^{-\frac{1}{p-1}+\frac{1}{q-1}}\mathrm ds\cdot c^\frac{q-p}{p-1}\leq \\
\leq \frac{q-1}{q-p}\int_0^t\varphi^{-1/(q-1)}(s)\left(\frac 1 s \int_0^s\varphi^{-1/(q-1)}\right)^{\frac{q-p}{p-1}}\mathrm ds - \\
- \int_0^t\left(\frac 1 s \int_0^s \varphi^{-1/(q-1)}\right)^{\frac{q-1}{p-1}}\mathrm ds + c^{\frac{q-1}{p-1}}\int_0^t \left(\frac 1 s \int_0^s \varphi\right)^{-1/(p-1)}\mathrm ds
\end{multline}

Now because of \eqref{eq:3p3} we get
\begin{multline} \label{eq:3p7}
\frac{q-1}{q-p} \int_0^t\varphi^{-1/(q-1)}(s) \left(\frac 1 s \int_0^s \varphi^{-1/(q-1)}\right)^{\frac{q-p}{p-1}}\mathrm ds - \int_0^t\left(\frac 1 s \int_0^s \varphi^{-1/(q-1)}\right)^\frac{q-1}{p-1}\mathrm ds \\
= \frac{p-1}{q-p}\, \frac{1}{t^{(q-p)/(p-1)}} \left(\int_0^t \varphi^{-1/(q-1)}\right)^{\frac{q-1}{p-1}}
\end{multline}

Thus \eqref{eq:3p6} gives
\begin{multline} \label{eq:3p8}
c^\frac{q-p}{p-1} \frac{q-1}{q-p} \int_0^t \varphi^{-1/(q-1)}(s) \left(\frac 1 s \int_0^s \varphi\right)^{-\frac{1}{p-1}+\frac{1}{q-1}}\mathrm ds \leq \\
\leq c^\frac{q-1}{p-1} \int_0^t \left(\frac 1 s \int_0^s \varphi\right)^{-1/(p-1)}\mathrm ds + \frac{p-1}{q-p}\,t\left(\frac 1 t \int_0^t\varphi^{-1/(q-1)}\right)^{(q-1)/(p-1)}.
\end{multline}

But
\begin{multline} \label{eq:3p9}
\left[\frac 1 t \int_0^t \varphi^{-1/(q-1)}\right]^{(q-1)/(p-1)} \leq M^{1/(p-1)} \left(\frac 1 t \int_0^t \varphi\right)^{-1/(p-1)} \overset{\eqref{eq:3p8}}{\implies} \\
\begin{multlined}[\textwidth]
c^{\frac{q-p}{p-1}}\,\frac{q-1}{q-p} \int_0^t\varphi^{-1/(q-1)} (s) \left(\frac 1 s \int_0^s \varphi\right)^{-\frac{1}{p-1}+\frac{1}{q-1}}\mathrm ds \leq \\
\leq c^{\frac{q-1}{p-1}} \int_0^t \left(\frac 1 s \int_0^s\varphi\right)^{-1/(p\mathrlap{-1)}}\mathrm ds + \frac{p-1}{q-p}\,t\,M^{1/(p-1)}\left(\frac 1 t\int_0^t\varphi\right)^{-1/\mathrlap{(p-1)}} \implies
\end{multlined} \\
\begin{multlined}[\textwidth]
A_1:=\frac{q-1}{q-p} \int_0^t\varphi^{-1/(q-1)}(s)\left(\frac 1 s \int_0^s\varphi\right)^{-\frac{1}{p-1}+\frac{1}{q-1}}\mathrm ds \leq \\
\leq c \int_0^t \left(\frac 1 s \int_0^s\varphi\right)^{-1/(p-1)}\mathrm ds + \frac{p-1}{q-p} \frac{M^{1/(p-1)}}{c^{(q-p)/(p-1)}}\,t\left(\frac 1 t \int_0^t\varphi\right)^{-1/(p-1)}.
\end{multlined}
\end{multline}

Now by using Theorem \ref{thm:1} we get
\begin{multline} \label{eq:3p10}
\int_0^t\left(\frac1 s \int_0^s\varphi\right)^{-\frac{1}{p-1}}\mathrm ds \leq \\
 \left(\frac{1+\frac{1}{p-1}}{\frac{1}{p-1}}\right)^{\frac{1}{q-1}}\hspace{-.5em}\int_0^t\left(\frac 1 s \int_0^s \varphi\right)^{-\frac{1}{p-1}+\frac{1}{q-1}}\hspace{-.5em}\varphi^{-\frac{1}{q-1}}(s)\,\mathrm ds -
\frac{\frac{1}{q-1}}{1+\frac{1}{p-1}}\,t\left(\frac 1 t \int_0^t\varphi\right)^{-\mathrlap{\frac{1}{p-1}}} = \\
= p^{\frac{1}{q-1}} A_1\,\frac{q-p}{q-1} - \frac{p-1}{(q-1)\,p}\,t\left(\frac 1 t \int_0^t\varphi\right)^{-\frac{1}{p-1}}.
\end{multline}

Thus in view of \eqref{eq:3p10}, \eqref{eq:3p9} becomes
\begin{multline} \label{eq:3p11}
\begin{multlined}[.95\textwidth]
A_1 \leq c\,p^{1/(q-1)}A_1\,\frac{q-p}{q-1} - c\,\frac{p-1}{(q-1)\,p}\,t\left(\frac 1 t \int_0^t\varphi\right)^{-1/(p-1)} + \\
+ \frac{p-1}{q-p}\, \frac{M^{1/(p-1)}}{c^{(q-p)/(p-1)}}\, t \left(\frac 1 t \int_0^1\varphi\right)^{-1/\mathrlap{(p-1)}} \implies
\end{multlined} \\
\begin{multlined}[.95\textwidth]
\left[1-c\,p^{1/(q-1)}\frac{q-p}{q-1}\right]A_1 \leq \left[\frac{M^{1/(p-1)}}{c^{(q-p)/(p-1)}}\,\frac{p-1}{q-p} - c\,\frac{p-1}{(q-1)\,p}\right]\cdot \\
\cdot t \left(\frac 1 t \int_0^t\varphi\right)^{-1/\mathrlap{(p-1)}} \implies
\end{multlined} \\
\begin{multlined}[.95\textwidth]
\implies K(p,q,c) \left[\frac 1 t \int_0^t\varphi^{-1/(q-1)}(s) \left(\frac 1 s \int_0^s\varphi\right)^{-1/(p-1)+1/(q-\mathrlap{1)}}\mathrm ds \right] \leq \\
\leq \left[\frac{p-1}{q-1}\,\frac{M^{1/(p-1)}}{c^{(q-p)/(p-1)}} - c\,\frac{(p-1)(q-p)}{p\,(q-1)^2}\right] \left(\frac 1 t \int_0^t\varphi\right)^{-1/(p-1)}
\end{multlined}
\end{multline}
where $K=K(p,q,c) = 1-c\,p^{1/(q-1)}\frac{q-p}{q-1}>0$, $\forall p\in(p_0,\,q]$.

As a consequence \eqref{eq:3p11} gives
\begin{multline} \label{eq:3p12}
K\left[\frac 1 t \int_0^t\varphi^{-1/(p-1)}(s)\left(\frac 1 s \int_0^s\varphi\right)^{-1/(p-1)+1/(q-1)}\mathrm ds\right] \leq \\
\leq \left(\frac 1 t \int_0^t\varphi\right)^{-1/(p-1)}\left(\frac{p-1}{q-1}\right)^2 c\,\frac q p.
\end{multline}

Now we use the inequality
\begin{multline*}
\frac 1 t \int_0^t\varphi^{-1/(q-1)}(s)\left(\frac 1 s \int_0^s\varphi\right)^{-1/(p-1)+1/(q-1)}\mathrm ds  \geq \\
\geq \left[\frac{1/(p-1)}{1+(1/(p-1))}\right]^{1/(q-1)} \cdot \frac 1 t \int_0^t\left(\frac 1 s \int_0^s\varphi \right)^{-1/(p-1)}\mathrm ds
\end{multline*}
which is true because of Theorem \ref{thm:E}. Thus \eqref{eq:3p12} gives
\begin{equation} \label{eq:3p13}
\frac{K'}{t}\int_0^t\left(\frac 1 s \int_0^s\varphi\right)^{-1/(p-1)}\mathrm ds \leq \left(\frac 1 t \int_0^t\varphi\right)^{-1/(p-1)}\left(\frac{p-1}{q-1}\right)^2c\,\frac q p
\end{equation}
where $K'=\frac{K}{p^{1/(q-1)}}$, $K = 1-c\,p^{1/(q-1)}\frac{q-p}{q-1}$.

Thus the inequality stated in Theorem \ref{thm:3} is proved.

We need to prove the sharpness of \eqref{eq:3p13}. We consider $a$ such that $0<a<q-1$ and the function $\varphi_a: (0,1]\to\mathbb R^+$ defined by $\varphi_a(t) = t^a$, $t\in(0,1]$. The function $\varphi_a$ is strictly increasing and $\frac 1 t \int_0^t\varphi_a = \frac 1 t \frac{t^{a+1}}{a+1} = \frac{1}{a+1}\varphi_a(t)$, $\forall t\in(0,1]$, while $\int_0^t\varphi_a^{-1/(q-1)} = \frac{1}{1-a/(q-1)} t^{1-a/(q-1)}$. Thus
\begin{multline*}
\left(\frac 1 t \int_0^t\varphi_a\right)\left[\frac 1 t \int_0^t \varphi_a^{-1/(q-1)}\right]^{q-1} = \left[\frac{q-1}{q-1-a}\right]^{q-1}\left[t^{-a/(q-1)}\right]^{q-1}\cdot \\
\cdot\left(\frac 1 t \int_0^t\varphi_a\right) = \frac{1}{a+1}\left(\frac{q-1}{q-1-a}\right)^{q-1} =: M(q,a)
\end{multline*}
and
\[
c_a = c(q,a) = [M(q,a)]^{1/(q-1)} = \left[\frac{q-1}{(q-1)-a}\right]\frac{1}{(1+a)^{1/(q-1)}}.
\]

Let now $p\in (p_0,q]$ and suppose additionally that $a<p-1$ so that $\int_0^1\varphi_a^{-1/(p-1)} = (p-1)/(p-1-a)$. We prove the sharpness of \eqref{eq:1p11} for $t=1$. That is we prove that the inequality
\[
\frac{K'}{t}\int_0^t\left(\frac 1 s \int_0^s\varphi\right)^{-1/(p-1)}\mathrm ds \leq \left(\frac 1 t \int_0^t\varphi\right)^{-1/(p-1)}c\,\frac q p \left(\frac{p-1}{q-1}\right)^2
\]
becomes sharp for $t=1$.
Obviously if $\displaystyle I_a=\int_0^1\left(\frac 1 s \int_0^s\varphi_a\right)^{-1/(p-1)}\mathrm ds$, then $\displaystyle I_a = \frac{1}{(1+a)^{-1/(p-1)}}\cdot$ $\cdot\int_0^1\varphi_a^{-1/(p-1)} = (1+a)^{1/(p-1)}\frac{1}{1-a/(p-1)}$ while $\displaystyle \left(\int_0^1\varphi_a\right)^{-1/(p\mathrlap{-1)}} = \left(\frac{1}{a+1}\right)^{-1/(p-1)}$. Thus in order to prove the sharpness of the above inequality we just need to prove that the following is true
\begin{multline} \label{eq:3p14}
\left[\frac{1}{p^{1/(q-1)}}-\frac{q-p}{q-1}\,c_a\right]\left(\frac{p-1}{(p-1)-a}\right) \cong  c_a\,\frac q p \left[\frac{(p-1)}{(q-1)}\right]^2\ \text{as}\ a\to(p-1)^- \iff \\
\begin{multlined}[.95\textwidth]
\left[\frac{1}{p^{1/(q-1)}} - \frac{q-p}{q-1}\frac{1}{(1+a)^{1/(q-1)}}\left(\frac{q-1}{(q-1)-a}\right)\right]\frac{1}{(p-1)-a} \cong \\
\cong \frac q p \,\frac{p-1}{(q-1)^2}\,\frac{1}{(1+a)^{1/(p-1)}}\,\frac{q-1}{(q-1)-a}, \text{as}\ a\to(p-1)^-.
\end{multlined}
\end{multline}
Let then $a\to(p-1)^-$ or equivalently $x:=(a+1)\to p^-$. Then for the proof of \eqref{eq:3p14} we just need to note that
\[
\frac{\left[p^{-\frac{1}{1/(q-1)}} - \frac{q-p}{q-x}\,\frac{1}{x^{1/(q-1)}}\right]}{p-x} \cong \frac q p\, \frac{p-1}{q-1}\,\frac{1}{p^{1/(q-1)}}\,\frac{1}{q-p},\ \ \text{as}\ \ x\to p^-,
\]
which is a simple application of De L'Hospitals rule.

The proof of Theorem \ref{thm:3} is now complete.

Nikolidakis Eleftherios, University of Ioannina, Department of Mathematics, E-mail address: enikolid@cc.uoi.gr

Stavropoulos Theodoros, National and Kapodistrian University of Athens, Department of Mathematics, E-mail address: tstavrop@math.uoa.gr


\begin{thebibliography}{10}
\bibitem{1}
{\sc Copson, E.}
\newblock Note on series of positive terms.
\newblock {\em Journal of the London Mathematical Society 3}, no. 1, (1928), 49--51.

\bibitem{2}
{\sc Hardy, G.}
\newblock Note on a theorem of {H}ilbert.
\newblock {\em Mathematische Zeitschrift 6}, no. 3-4 (1920), 314--317.

\bibitem{3}
{\sc Hardy, G.~H., Littlewood, J.~E., and P{\'o}lya, G.}
\newblock {\em Inequalities}.
\newblock Cambridge University Press, 1952.

\bibitem{4}
{\sc Korenovskii, A.}
\newblock The exact continuation of a reverse {H}{\"o}lder inequality and
  {M}uckenhoupt's conditions.
\newblock {\em Mathematical Notes 52}, no. 5-6, (1992), 1192--1201.

\bibitem{5}
{\sc Kufner, A., Maligranda, L., and Persson, L.-E.}
\newblock The prehistory of the {H}ardy inequality.
\newblock {\em The American Mathematical Monthly 113}, no. 8, (2006), 715--732.

\bibitem{6}
{\sc Leindler, L.}
\newblock Generalization of inequalities of {H}ardy and {L}ittlewood.
\newblock {\em Acta Scient. Math. (Szeged) 31}, (1970), 279-285.

\bibitem{7}
{\sc Levinson, N., et~al.}
\newblock Generalizations of an inequality of {H}ardy.
\newblock {\em Duke Mathematical Journal 31}, (1964), 389--394.

\bibitem{8}
{\sc Love, E.}
\newblock Generalizations of {H}ardy's and {C}opson's inequalities.
\newblock {\em Journal of the London Mathematical Society 2}, 30, (1984),
  431--440.

\bibitem{9}
{\sc Nikolidakis, E.~N.}
\newblock A sharp integral {H}ardy type inequality and applications to
  {M}uckenhoupt weights on $\mathbb R$.
\newblock {\em Ann. Acad. Scient. Fenn. Math.}, 39 (2014), 887--896.

\bibitem{10}
{\sc Pachpatte, B.~G.}
\newblock {\em Mathematical inequalities}, vol.~67.
\newblock North Holland Math. Library, 2005.
\end{thebibliography}

\end{document}